\newtheorem{theorem}{Theorem}[section]
\newtheorem{corollary}[theorem]{Corollary}
\newtheorem{prop}[theorem]{Proposition}
\newtheorem{lemma}[theorem]{Lemma}
\newtheorem{proposition}[theorem]{Proposition}
\theoremstyle{definition}
\newtheorem{example}[theorem]{Example}
\newtheorem{remark}[theorem]{Remark}
\newcommand{\fn}{\!:}
\newcommand{\C}{\mathbb C}
\newcommand{\R}{{\mathbb R}}
\newcommand{\lla}{\left\langle}
\newcommand{\rra}{\right\rangle}
\newcommand{\mc}{\mathcal}
\newcommand{\tn}{\textnormal}
\newcommand{\Z}{\mathbb Z}
\newcommand{\F}{\mathbb F}
\begin{document}

\title[On Exotic Group C*-algebras]
{On Exotic Group C*-algebras} 

\date{\today}

\author[Zhong-Jin Ruan and Matthew Wiersma]{Zhong-Jin Ruan$^1$ and Matthew Wiersma$^2$}

\address{Zhong-Jin Ruan: Department of Mathematics, University  of Illinois at Urbana-Champaign, Urbana, IL 61801, USA}
\email{z-ruan@illinois.edu}

\address{Matthew Wiersma: Department of Pure Mathematics, University  of Waterloo, Waterloo, ON canada N2L 3G1}
\email{mwiersma@uwaterloo.ca}

\keywords{Amenability, Factorization Property,  and Local Properties of Groups and Group C*-algebras}
\thanks{2010 \it{Mathematics Subject Classification:}
\rm{Primary 22D25; Secondary 46L06, 43A35}}
\thanks{$^1$ The first author was  partially supported by the Simons Foundation.}
\thanks{$^2$ The second author was partially supported by an NSERC Postgraduate Scholarship.}


\begin{abstract}
Let $\Gamma$ be a discrete group. A C*-algebra $A$ is an exotic C*-algebra (associated to $\Gamma$) if there exist proper surjective C*-quotients $C^*(\Gamma)\to A\to C^*_r(\Gamma)$ which compose to the canonical quotient $C^*(\Gamma)\to C^*_r(\Gamma)$. In this paper, we show that a large class of exotic C*-algebras have poor local properties. More precisely, we demonstrate the failure of local reflexitity, exactness, and local lifting property. Additionally, $A$ does not admit an amenable trace and, hence, is not quasidiagonal and does not have the WEP when $A$ is from the class of exotic C*-algebras defined by Brown and Guentner (see \cite{bg}). In order to achieve the main results of this paper, we prove a  result which implies the factorization property for the  
class of discrete groups which are algebraic subgroups of locally compact amenable groups.
  
\end{abstract}
\maketitle

\section{Introduction} \label{section:intro}


There are two natural group C*-algebras associated to each discrete group $\Gamma$: the full group C*-algebra $C^*(\Gamma)$ and the reduced group C*-algebra $C^*_r(\Gamma)$.
The reduced group C*-algebra is always a canonical quotient of the full group C*-algebra and, further, the group $\Gamma$ is amenable if and only if this quotient is injective.
In particular, this implies the natural quotient map
$ C^*(\Gamma) \to C^*_r(\Gamma)$ is proper (i.e., non-injective) when $\Gamma$ is non-amenable.
In this case,   we are interested in understanding
intermediate C*-algebras $A$ which sit properly between $ C^*(\Gamma) $ and $ C^*_r(\Gamma) $ in the sense that there exist proper surjective C*-quotients 
\[
C^*(\Gamma) \to A \to C^*_r(\Gamma)
\]
whose composition is the canonical quotient from $C^*(\Gamma)$ onto $C^*_r(\Gamma)$. We call such a C*-algebra $A$ an {\em exotic C*-algebra}
(associated with $\Gamma$) and note that $A$ is a group C*-algebra in the sense that there exists a unitary representation $\pi: \Gamma \to B(H)$ such that 
$ A = C^*_\pi(\Gamma) = \mbox{span}\{\pi(s) : s\in \Gamma\}^{-\|\cdot\|}\subseteq B(H)$.

Recently,  exotic C*-algebras  
have been studied by  Brown-Guentner (see \cite{bg})  and Wiersma (see \cite{w1}) 
for discrete groups and by Kyed-So\l{}tan (see \cite{ks}) and Brannan-Ruan (see \cite{br})
for discrete quantum groups.
In \cite{bg}, Brown and Guentner introduce an interesting class of 
C*-algebras associated to algebraic ideals $D$ of $\ell^\infty(\Gamma)$. These are called {\it $D$-C*-algebras} and are denoted by $C^*_D(\Gamma)$.
Since  for any $1 \le p \leq  \infty$, $\ell^p(\Gamma)$ is an ideal in $\ell^\infty(\Gamma)$,  we can consider $\ell^p$-C*-algebras
$C^*_{\ell^p}(\Gamma)$. As the C*-algebra $C^*_{\ell^\infty}(\Gamma)$ is always the full group C*-algebra $C^*(\Gamma)$ and $C^*_{\ell^p}(\Gamma)$ is always the reduced group C*-algebra $C^*_r(\Gamma)$ for $1\leq p\leq 2$ (see \cite{bg}),
we are mainly interested in the case of $2 < p < \infty$.

It is shown by Brown and Guentner (see \cite{bg}) and Okayasu (see \cite{o})  that 
for the non-commutative free group ${\mathbb F}_d$ of $d$-generators, 
$C^*_{\ell^p}({\mathbb F}_d)$ are exotic and distinct
for all $2 < p < \infty$.
It follows that if $\Gamma$ is a discrete group containing a copy of $\F_2$ (e.g., $SL_n(\Z)$ for $n\geq 2$),  then  $C^*_{\ell^p}(\Gamma)$ are also exotic and distinct for each $2 < p < \infty$ (see \cite{w1}).
The results in this paper apply to many interesting examples of $D$-C*-algebras including $C^*_{\ell^p}(\Gamma)$ when $\Gamma$ contains a noncommutative free group and $2<p<\infty$.

The main results of this paper apply to a much broader class of exotic C*-algebras than simply those of the form $C^*_{D}(\Gamma)$. Bekka demonstrates in \cite{b} that there are many residually finite discrete groups $\Gamma$ whose full group C*-algebra $C^*(\Gamma)$ is not residually finite dimensional. Examples include $SL_n(\Z)$ for $n\geq 3$ and $Sp_n(\Z)$ for $n\geq 2$. In this case, the group C*-algebra $C^*_{\mc F}(\Gamma)$ arising from the finite dimensional representations of $\Gamma$ is exotic.
These C*-algebras are also included in the class of exotic group C*-algebras considered in this paper.
Besides these cases, it also includes the construction of Bekka-Kaniuth-Lau-Schlichting (see \cite {bkls}).

Recall that the reduced group C*-algebra $C^*_r(\Gamma)$ often has some kind of approximation property.
For instance, a discrete group $\Gamma$ is amenable 
if and only if its  reduced  group C*-algebra $C^*_r(\Gamma)$ is nuclear.
For non-amenable groups like ${\mathbb F}_d$,  hyperbolic groups,  ${\mathbb Z}^2 \rtimes SL_2({\mathbb Z})$, and $SL_3({\mathbb Z})$, their reduced group C*-algebras have the completely contractive approximation property (CCAP), completely bounded approximaton property (CBAP), operator approximation property (OAP), and exactness, respectively.
It is natural to ask whether exotic group C*-algebras, such as
$C^*_{\ell^p}({\mathbb F}_d)$ for $2 < p < \infty$, have any such approximation properties.
Our results strongly imply this is not the case.
Even though the main results in this paper are dealing with group C*-algebras, 
the major techniques we apply in the proofs are largely motivated from abstract harmonic analysis.

This paper is organized as follows. The following section recalls some preliminary notation and results. In section 3, we prove a result which implies the factorization property for discrete groups which embed algebraically into amenable locally compact groups (Theorem \ref{factorization property}). This result proves invaluable to us throughout the rest of the paper. The main results of this paper are contained in section 4. Here the class of exotic C*-algebras we consider is defined and is shown to never be locally reflexive (Theorem \ref{nonlocal}), thus, not exact (Theorem \ref{nonexact}), and never to have the local lifting property (Theorem \ref{llp}). In the section 5, we conclude the paper by remarking that exotic $D$-C*-algebras fail to admit additional desirable properties for general discrete groups. More specifically, they are shown to fail to admit an amenable trace (Proposition \ref{trace}) and, so, are not quasidiagonal and do not have the weak expectation property (Corollary \ref{quasi and wep}). Since amenable groups admit no exotic group C*-algebras, we will primarily be studying C*-algebras associated with non-amenable groups throughout this paper.


\section{Preliminaries}

In this section we recall $D$-representations and their associated C*-algebras as defined by Brown and Guentner (see \cite{bg}). These C*-algebras are an important source of examples throughout this paper. 
At the end of this section we remark on a way of viewing tensor products of group $C^*$-algebras which proves useful to us in this paper.

\subsection*{$D$-representations and C*-algebras}

Let $\Gamma$ be a discrete group and $D$ an algebraic ideal of $\ell^\infty(\Gamma)$. A (unitary) representation $\pi\fn \Gamma\to \mc B(H)$ is a {\it $D$-representation} if there exists a dense subspace $H_0$ of $H$ such that the coefficient function $s\mapsto \lla \pi(s)\xi,\xi\rra$ is an element of $D$ for every $\xi$ in the dense subspace $H_0$. Note that if $H_0$ is such a subspace for a $D$-representation $\pi\fn \Gamma\to B(H)$ and $\sigma\fn \Gamma\to B(K)$ is any other representation of $\Gamma$, then $s\mapsto \lla (\pi\otimes\sigma)(s)\xi\otimes\eta,\xi\otimes\eta\rra\in D$ for every $\xi\in H_0$ and $\eta\in K$. Hence, it follows from the polarization identity that $\pi\otimes\sigma$ is also a $D$-representation (see \cite[Remark 2.4]{bg}). We also note that the direct sum of an arbitrary number of $D$-representations is again a $D$-representation (see \cite[Remark 2.5]{bg}) and, consequently, we can build a $D$-representation $\pi_D$ such that $\pi_D$ extends to a faithful $*$-representation of $C^*_D(\Gamma)$ by taking a large enough direct sum of $D$-representations.

We can define a C*-seminorm $\|\cdot\|_{D}$ on the group ring $\C[\Gamma]$ by
$$\|x\|_{D}=\sup\{\|\pi(x)\|: \pi\tn{ is a $D$-representation of }\Gamma\}.$$
Observe that if $\Gamma$ admits a $D$-representation $\pi$, then $\|\cdot\|_D$ is faithful on $\C[\Gamma]$ and it dominates the reduced C*-norm since $\pi\otimes \lambda$ is a $D$-representation which is unitarily equivalent to an amplification of $\lambda$ by Fell's absorption principle.
The {\it $D$-C*-algebra} $C^*_{D}(\Gamma)$ is the completion of the group ring $\C[\Gamma]$ with respect to $\|\cdot\|_{D}$.

The two classes of ideals $D$ which are most predominately studied in the literature are when $D=c_0$ ($=c_0(\Gamma)$) or $D=\ell^p$ ($=\ell^p(\Gamma)$) for $1\leq p<\infty$. Brown and Guentner observed that a discrete group $\Gamma$ has the Haagerup property if and only if $C^*_{c_0}(\Gamma)=C^*(\Gamma)$ canonically (see \cite[Corollary 3.4]{bg}) and that $\Gamma$ is amenable if and only if the trivial representation $1_\Gamma$ extends to a $*$-representation of $C^*_{\ell^p}(\Gamma)$ for some $1\leq p<\infty$ (see \cite[Remark 2.13]{bg}). Brown and Guentner also observed $\|\cdot\|_{\ell^p}=\|\cdot\|_r$ for each $1\leq p\leq 2$ (see \cite[Proposition 2.11]{bg}), but showed that for a noncommutative free group $\F_d$ on finitely many generators, there exists $2<p<\infty$ such that $\|\cdot\|_{\ell^p}$ is distinct from both the full and reduced group C*-norms (see \cite[Proposition 4.4]{bg}). Okayasu substantially improved upon this result by showing that these C*-norms $\|\cdot\|_{\ell^p}$ are distinct for every $2\leq p\leq\infty$ for $\F_d$ (see \cite[Corollary 3.2]{o}). It was later observed by Wiersma (see \cite[Theorem 2.4]{w1}) that if $H$ is a subgroup of a discrete group $\Gamma$, then $C^*_{\ell^p}(H)$ canonically embeds into $C^*_{\ell^p}(\Gamma)$. Consequently, if $\Gamma$ contains a copy of $\F_2$, then the C*-norms $\|\cdot\|_{\ell^p}$ on $\C[\Gamma]$ are distinct for every $2\leq p\leq\infty$. We also note that $\ell^p$-C*-algebras can also be exotic group C*-algebras for non-amenable groups $\Gamma$ not containing a copy of $\F_2$. For example, if $\Gamma$ is a non-amenable Coxeter group, then there exists $2<p<\infty$ such that $C^*_{\ell^p}(\Gamma)$ is an exotic C*-algebra (see \cite[Remark 4.5]{bg}).

\subsection*{Additional Remark}

We finish the preliminaries by making a brief remark about tensor products of group C*-algebras. Let $\Gamma$ be a discrete group. Then the algebraic tensor product $\C[\Gamma]\odot\C[\Gamma]$ is equal to $\C[\Gamma\times\Gamma]$ via the identification of $s\otimes t$ with $(s,t)$ for $s,t\in\Gamma$. For representations $\pi$ and $\sigma$ weakly containing $\lambda$ this, in particular, allows us to view C*-completions of $C^*_{\pi}(\Gamma)\odot C^*_{\sigma}(\Gamma)$ as being C*-completions of $\C[\Gamma\times\Gamma]$ -- an identification which will extremely valuable to us.

\section{The Factorization Property}

A locally compact group $G$ is said to have the {\it factorization property} in the sense of Kirchberg (see \cite{k}) if the representation $\sigma_{\lambda_G,\rho_G}\fn G\times G\to B(L^2(G))$ defined by 
\[
\sigma_{\lambda_G,\rho_G}(s_1,s_2)\xi(t)=\lambda_G(s_1)\rho_G(s_2)\xi(t)=\xi(s_1^{-1}ts_2)\Delta(s_2)^{1/2}
\]
extends to a $*$-representation of $C^*(G)\otimes_{\min}C^*(G)$. Wassermann showed that if a countable discrete group $\Gamma$ is residually finite, then $\Gamma$ has the factorization property (see \cite{wa}). The following theorem implies the factorization property for a much larger class of discrete groups.

\begin{lemma}\label{bai}
Let $G$ be a locally compact group and $\{f_i\}$ an approximate identity for $L^1(G)$ with $f_i\geq 0$. If $U$ is a neighbourhood of the identity in $G$, then $\int_{G\backslash U} f_i(s)\,ds\to 0$.
\end{lemma}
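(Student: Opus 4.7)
The plan is to exploit the defining property of an approximate identity, namely $\|f_i * g - g\|_1 \to 0$ for every $g \in L^1(G)$, applied to a bump $g$ concentrated near $e$, and then decode this convergence through a Fubini calculation involving $f_i$ directly.

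Given a neighborhood $U$ of $e$, I would first use continuity of multiplication together with local compactness to choose a symmetric open precompact neighborhood $V$ of $e$ satisfying $V^3 \subseteq U$, and set $g = |V|^{-1}\mathbf{1}_V$. Then $g \in L^1(G)$ is non-negative with $\|g\|_1 = 1$, and $g$ vanishes outside $V \subseteq V^2$. Now I would estimate $\int_{G \setminus V^2}(f_i * g)(t)\,dt$ in two ways. On the one hand, since $g \equiv 0$ on $G \setminus V^2$,
\[
\int_{G \setminus V^2}(f_i * g)(t)\,dt \;=\; \int_{G \setminus V^2}\bigl((f_i * g)(t) - g(t)\bigr)\,dt \;\leq\; \|f_i * g - g\|_1 \;\to\; 0.
\]
On the other hand, using $g(s^{-1}t) = |V|^{-1}\mathbf{1}_{sV}(t)$, Fubini gives
\[
\int_{G \setminus V^2}(f_i * g)(t)\,dt \;=\; \frac{1}{|V|}\int_G f_i(s)\,|sV \cap (G \setminus V^2)|\,ds.
\]
For $s \notin V^3$ one has $sV \cap V^2 = \emptyset$ (since $sV \cap V^2 \neq \emptyset$ forces $s \in V^2 V^{-1} = V^3$ by symmetry of $V$), so $|sV \cap (G \setminus V^2)| = |sV| = |V|$, and therefore the right-hand side is bounded below by $\int_{G \setminus V^3} f_i(s)\,ds$.

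Combining these two estimates yields $\int_{G \setminus V^3} f_i \to 0$; since $V^3 \subseteq U$ implies $G \setminus U \subseteq G \setminus V^3$ and $f_i \geq 0$, the desired conclusion $\int_{G \setminus U} f_i \to 0$ follows. The only real subtlety I foresee is the neighborhood bookkeeping: one must pick $V$ small enough that $V^3 \subseteq U$, while simultaneously arranging the two estimates at $G \setminus V^2$ (where $g$ vanishes, so the approximate identity bound is available) and $G \setminus V^3$ (where $sV$ sits outside $V^2$, giving the clean lower bound). This is routine but requires care; no deeper difficulty is anticipated.
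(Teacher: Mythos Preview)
Your proof is correct and follows essentially the same strategy as the paper's: test the approximate identity against the indicator of a small symmetric neighbourhood of $e$ and use Fubini to extract $\int_{G\setminus U} f_i$ as a lower bound for $\|f_i*g-g\|_1$. The only cosmetic differences are that the paper argues by contradiction, takes $V$ with $V^2\subset U$, and incurs a modular-function factor via a change of variables, whereas your direct argument with $V^3\subseteq U$ and the computation of $|sV\cap(G\setminus V^2)|$ uses only left invariance and so sidesteps $\Delta$ entirely.
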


\begin{proof}
Suppose towards a contradiction that $\int_{G\backslash U} f_i(s)\,ds\not\to 0$ and let $V$ be a compact symmetric (i.e., inverse closed) neighbourhood of the identity such that $V^2\subset U$. Then
\begin{eqnarray*}
\left\|f_i*1_V-1_V\right\|_1 &\geq& \int_{G\backslash V} \left|\int_G f_i(s)1_V(s^{-1}t)\,ds-1_V(t)\right|dt \\
&= & \int_{G\backslash V} \int_V f_i(ts)\,ds\,dt \\
&\geq & m(V) \inf_{s\in V} \int_{G\backslash V} f_i(ts)\,dt \\
&\geq & m(V) \left(\min_{s\in V} \Delta(s)^{-1}\right)\int_{G\backslash U}f_i(t)\,dt\,\not\to\, 0.
\end{eqnarray*}
This contradicts the assumption that $\{f_i\}$ is an approximate identity for $L^1(G)$ and, hence, we conclude that $\int_{G\backslash U} f_i(s)\,ds \to 0$.
\end{proof}

Let $G$ be a  locally compact  group and $\Gamma$ an algebraic subgroup of $G$ endowed with the discrete topology. By algebraic subgroup, we mean a subgroup of $G$ which is not necessarily closed in the topology of $G$.
Then $\lambda_G$ restricted to $\Gamma$ defines a representation
\[
\lambda_G|_\Gamma : \Gamma \to B(L^2(G)).
\]
We let $C^*_{\lambda_G|_{\Gamma}}(\Gamma) = {\rm span}\{\lambda_G|_{\Gamma}(s): s\in \Gamma\}^{- \|\cdot\|}\subseteq B(L^2(G))$
denote the corresponding group C*-algebra.
It is always the case that $C^*_{r}(\Gamma)$ is a canonical quotient of $C^*_{\lambda_G|_\Gamma}$ (see \cite{bkls}) or, equivalently, that $\lambda_\Gamma$ is weakly contained in $\lambda_G|_\Gamma$.

\begin{theorem}\label{factorization property}
Let $G$ be an amenable locally compact  group and $\Gamma$ an algebraic subgroup of $G$ endowed with the discrete topology. Then the representation $\sigma_{\lambda_\Gamma,\rho_\Gamma}\fn \Gamma\times\Gamma\to B(\ell^2(\Gamma))$ defined by $\sigma_{\lambda_\Gamma,\rho_\Gamma}(s,t)=\lambda(s)\rho(t)$ extends to a $*$-representation of $C^*_{\lambda_G|_\Gamma}(\Gamma)\otimes_{\min} C^*_{\lambda_G|_{\Gamma}}(\Gamma)$.
\end{theorem}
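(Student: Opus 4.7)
The plan is to realize $\sigma_{\lambda_\Gamma,\rho_\Gamma}$ as the GNS representation of a state $\psi$ on the minimal tensor product. The positive definite function $(s,t)\mapsto\delta_{s,t}$ on $\Gamma\times\Gamma$ has GNS representation canonically isomorphic to $\sigma_{\lambda_\Gamma,\rho_\Gamma}$ (cyclic vector $\delta_e\in\ell^2(\Gamma)$, since $\sigma_{\lambda_\Gamma,\rho_\Gamma}(s,t)\delta_e=\delta_{st^{-1}}$), so it suffices to produce a state $\psi$ on $C^*_{\lambda_G|_\Gamma}(\Gamma)\otimes_{\min}C^*_{\lambda_G|_\Gamma}(\Gamma)$ with $\psi(\lambda_G(s)\otimes\lambda_G(t))=\delta_{s,t}$ for every $s,t\in\Gamma$; its GNS representation then furnishes the extension.

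Amenability of $G$ makes $C^*(G)$ nuclear, so the commuting representations $\lambda_G,\rho_G$ on $L^2(G)$ together define a $*$-representation of $C^*(G)\otimes_{\min}C^*(G)=C^*(G)\otimes_{\max}C^*(G)$ sending $f\otimes g$ to $\lambda_G(f)\rho_G(g)$. By injectivity of the minimal tensor product and the canonical $*$-identification of $C^*_{\lambda_G|_\Gamma}(\Gamma)$ with $C^*_{\rho_G|_\Gamma}(\Gamma)$, this restricts to a $*$-representation $\Phi\colon C^*_{\lambda_G|_\Gamma}(\Gamma)\otimes_{\min}C^*_{\lambda_G|_\Gamma}(\Gamma)\to B(L^2(G))$ with $\Phi(\lambda_G(s)\otimes\lambda_G(t))=\lambda_G(s)\rho_G(t)$. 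The state $\psi$ will be obtained as a weak-$*$ cluster point of vector states $\omega_i\circ\Phi$, where $\omega_i(T)=\langle T\zeta_i,\zeta_i\rangle$ for unit vectors $\zeta_i\in L^2(G)$ built from the BAI of Lemma \ref{bai}.

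Let $\{f_i\}\subseteq L^1(G)_+$ be a bounded approximate identity with $\|f_i\|_1=1$, and set $\zeta_i=f_i^{1/2}$, so $\|\zeta_i\|_2=1$ and
\[
\omega_i(\lambda_G(s)\rho_G(t))=\Delta(t)^{1/2}\int_G f_i^{1/2}(s^{-1}ut)\,f_i^{1/2}(u)\,du.
\]
For $s\neq t$ in $\Gamma$, because $st^{-1}\neq e$ one can choose open neighborhoods $U,V$ of $e$ with $U\cap sVt^{-1}=\emptyset$. Splitting the integral according to whether $u\in U$ and applying Cauchy--Schwarz on each piece, the outside-$U$ contribution is controlled by $\int_{G\setminus U}f_i\to 0$ by Lemma \ref{bai}; for the inside-$U$ piece, the substitution $v=s^{-1}ut$ together with the inclusion $s^{-1}Ut\subseteq G\setminus V$ bounds it by a multiple of $\int_{G\setminus V}f_i\to 0$. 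Hence $\omega_i(\lambda_G(s)\rho_G(t))\to 0$. For the diagonal case $s=t\in\Gamma$, Cauchy--Schwarz only yields $\omega_i(\lambda_G(s)\rho_G(s))\le 1$, with equality requiring $f_i(s^{-1}us)=\Delta(s)^{-1}f_i(u)$; to force convergence to $1$ one refines the BAI, using amenability of $G$, so that $\zeta_i$ is almost invariant under the unitary representation $U_s\zeta(u):=\Delta(s)^{1/2}\zeta(s^{-1}us)$ of $G$ on $L^2(G)$, which weakly contains the trivial representation for amenable $G$.

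Any weak-$*$ cluster point $\psi$ of $(\omega_i\circ\Phi)$ then satisfies $\psi(\lambda_G(s)\otimes\lambda_G(t))=\delta_{s,t}$ for all $s,t\in\Gamma$, and its GNS representation delivers the desired extension. The main obstacle is the diagonal case $s=t$ when $G$ is not unimodular, where the modular function enters the Cauchy--Schwarz saturation: one must combine the near-identity concentration of Lemma \ref{bai} with a modular-twisted approximate conjugation-invariance of the BAI, which is the quantitative content of amenability required beyond the identification $C^*(G)=C^*_r(G)$.
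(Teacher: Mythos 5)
Your overall strategy is the same as the paper's: build $\Phi\colon C^*_{\lambda_G|_\Gamma}(\Gamma)\otimes_{\min}C^*_{\lambda_G|_\Gamma}(\Gamma)\to B(L^2(G))$ sending $\lambda_G(s)\otimes\lambda_G(t)$ to $\lambda_G(s)\rho_G(t)$, then find unit vectors $\zeta_i=f_i^{1/2}$ whose coefficient functions converge pointwise on $\Gamma\times\Gamma$ to $1_\Delta$, and conclude by GNS. (Minor point on the first step: for non-discrete $G$ the unitaries $\lambda_G(s)$ do not lie in $\lambda_G(C^*(G))$, so your "restriction" of the nuclearity representation really passes through multiplier algebras; the paper avoids this by working with the injective von Neumann algebra $L(G)$ and the Effros--Lance multiplication result \cite{el}, so that $C^*_{\lambda_G|_\Gamma}(\Gamma)\subseteq L(G)$ directly. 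This is fixable, not a real objection.)

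The genuine gap is in the diagonal case, and it is exactly the point where the whole theorem lives. You acknowledge that Cauchy--Schwarz only gives $\omega_i(\lambda_G(s)\rho_G(s))\le 1$ and say one "refines the BAI, using amenability of $G$," so that $\zeta_i$ is almost invariant under the modular-twisted conjugation representation, justified by the assertion that this representation weakly contains the trivial representation when $G$ is amenable. Two problems. First, that weak containment statement for general amenable locally compact $G$ is itself the nontrivial theorem of Losert and Rindler \cite{lr} which the paper's proof cites; it is not a routine consequence of $C^*(G)=C^*_r(G)$. Second, and more seriously, even granting it, almost-invariant vectors for the conjugation representation need not come from an approximate identity of $L^1(G)$ (for abelian $G$ every function is exactly conjugation-invariant), while your off-diagonal estimates require $\{f_i\}$ to concentrate at the identity via Lemma \ref{bai}. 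You need one net with \emph{both} properties simultaneously --- a positive, norm-one approximate identity $\{f_i\}$ of $L^1(G)$ with $\|\tau(s)f_i-f_i\|_1\to 0$ --- and producing such a net is precisely the content of the Losert--Rindler construction; it is not obtained by a soft "refinement" (e.g., convolving an approximate identity with almost conjugation-invariant functions destroys one property or the other). That this step cannot be waved through is underscored by the paper's discussion of Kirchberg's Lemma 7.3(iii) in \cite{k} and Thom's counterexample \cite{thom}: the first half of your argument works whenever $C^*(G)$ is nuclear, yet the conclusion fails for some such $G$, so the diagonal convergence is exactly where full amenability must be used, and it is the step your proposal asserts rather than proves.
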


\begin{proof}

Since $G$ is amenable, the group von Neumann algebra $L(G)$ is injective. Therefore the multiplication map from $L(G)\odot R(G)$ to $B(L^2(G))$ defined by $a\otimes b\mapsto ab$ is continuous with respect to the minimal C*-norm (see \cite[Lemma 2.1]{el}). Since $\rho_G$ is unitarily equivalent to $\lambda_G$ and $C^*_{\lambda_G|_\Gamma}(\Gamma)$ is contained in $L(G)$, it follows that $\sigma_{\lambda_G|_\Gamma,\rho_G|_\Gamma}$ extends to a $*$-representation of $C^*_{\lambda_G|_\Gamma}(\Gamma)\otimes_{\min}C^*_{\lambda_G|_\Gamma}(\Gamma)$ into $B(L^2(G))$. 


To obtain the result, we need to show that the representation $\sigma_{\lambda_\Gamma,\rho_\Gamma}\fn \Gamma\times\Gamma\to B(\ell^2(\Gamma))$ extends to a $*$-representation of $C^*_{\lambda_G|_\Gamma}(\Gamma)\otimes_{\min}C^*_{\lambda_G|_\Gamma}(\Gamma)$. Towards this goal, define $\tau\fn G\to B(L^1(G))$ by $\tau(s)f(t):=f(s^{-1}ts)\Delta(s)$. In the proof of \cite[Theorem 3]{lr}, Losert and Rendler show that for amenable locally compact groups $G'$, there exists a bounded approximate identity $\{f_i\}$ for $L^1(G')$ with $\|f_i\|_1=1$ and $f_i\geq 0$ such that $\|\tau(s)f_i-f_i\|_1\to 0$ uniformly on compact subsets of $G'$. Let $\{f_i\}\subset L^1(G)$ be such a net for $G$ and, for each index $i$, define $\xi_i=f_i^{1/2}\in L^2(G)$.
Then, for fixed $s\in G$, we observe that
\begin{eqnarray*}
\left\|\sigma_{\lambda_G,\rho_G}(s,s)\xi_i-\xi_i\right\|_2^2 &=& \int_G \left|\xi_i(s^{-1}ts)\Delta(s)^{1/2}-\xi_i(t)\right|^2 dt \\
&\leq & \int_G \left|(\xi_i(s^{-1}ts)\Delta(s)^{1/2})^{2}-(\xi_i(t))^2\right|dt \\
&=& \int_G \left| \tau(s)f_i(t)-f_i(t)\right|\,dt\,\to\, 0.
\end{eqnarray*}
So
\begin{eqnarray*}
\left|\left(\sigma_{\lambda_G,\rho_G}\right)_{\xi_i,\xi_i}(s,s)-1\right| &=& \left|\int_G\xi_i(s^{-1}ts)\Delta(s)^{1/2}\xi_i(t)\,dt-\int_G \xi_i(t)^2\,dt\right| \\
&\leq & \|\sigma_{\lambda_G,\rho_G}(s,s)\xi_i-\xi_i\|_2\|\xi_i\|_2 \,\to\, 0.
\end{eqnarray*}
We will next observe that if $s_1$ and $s_2$ are distinct elements of $G$, then $(\sigma_{\lambda_G,\rho_G})_{\xi_i,\xi_i}(s_1,s_2)\to 0$.

Note that if $s\in G$, then $\{\tau(s)f_i\}$ is an approximate identity for $L^1(G)$ since if $g\in L^1(G)$, then
$$ \lim_i\tau(s)f_i * g= \lim_i\,(\tau(s)f_i-f_i)*g+f_i*g = 0*g+g=g.$$
Let $s_1$ and $s_2$ be distinct elements of $G$, choose an closed neighbourhood $U$ of the identity such that $s_1s_2^{-1}\not \in U$ and let $V$ denote $G\backslash (s_2s_1^{-1}U)$. Then
\begin{eqnarray*}
\int_U\left|\sigma_{\lambda_G,\rho_G}(s_1,s_2)\xi_i(t)\right|^2\,dt &= & \int_U f_i(s_2^{-1}(s_2s_1^{-1}t)s_2)\Delta(s_2)\,dt \\
&=& \int_{s_2s_1^{-1}U}\tau(s_2)f_i(t)\,dt \\
&=& \int_{G\backslash V}\tau(s_2)f_i(t)\,dt\,\to\,0
\end{eqnarray*}
by Lemma \ref{bai} as $V$ is an open neighbourhood of the identity.
So, taking $U$ as above, we observe that
\begin{eqnarray*}
&&\left|\left(\sigma_{\lambda_G,\rho_G}\right)_{\xi_i,\xi_i}(s_1,s_2)\right|\\
&\leq &  \left|\int_{U} \left(\sigma_{\lambda_G,\rho_G}(s_1,s_2)\xi_i(t)\right)\xi_i(t)\,dt\right|+\left|\int_{G\backslash U} \left(\sigma_{\lambda_G,\rho_G}(s_1,s_2)\xi_i(t)\right)\xi_i(t)\,dt\right| \\
&\leq & \left(\int_U \left|\sigma_{\lambda_G,\rho_G}(s_1,s_2)\xi_i(t)\right|^2 dt\right)^{1/2}\|\xi_i\|_2+\|\xi_i\|_2\left(\int_{G\backslash U}f_i(t)\,dt\right)^{1/2}\to  0.
\end{eqnarray*}
Therefore, we have shown that this net of positive definite functions $(\sigma_{\lambda_G |_\Gamma,\rho_G |_\Gamma})_{\xi_i,\xi_i}$ on $\Gamma\times \Gamma$ converges pointwise to the characteristic function $1_{\Delta}$, where $\Delta$ denotes the diagonal of $\Gamma\times \Gamma$. 
This shows that $1_\Delta$ is a positive definite element in  $(C^*_{\lambda_G|_{\Gamma}}(\Gamma)\otimes_{\min}C^*_{\lambda_G|_{\Gamma}}(\Gamma))^*$.
As it is easily seen that $\sigma_{\lambda_\Gamma,\rho_\Gamma}$ is the GNS representation of $1_{\Delta}$, it follows that $\sigma_{\lambda_\Gamma,\rho_\Gamma}$ extends to a $*$-homomorphism of $C^*_{\lambda_G|_{\Gamma}}(\Gamma)\otimes_{\min}C^*_{\lambda_G|_{\Gamma}}(\Gamma)$.



\end{proof}

Note that since $C^*_{\lambda_G|_\Gamma}(\Gamma)\otimes_{\min}C^*_{\lambda_G|_\Gamma}(\Gamma)$ is a canonical quotient of $C^*(\Gamma)\otimes_{\min} C^*(\Gamma)$, this theorem immediately implies that such groups $\Gamma$ have the factorization property. This is a weaker statement than that of Kirchberg in \cite[Lemma 7.3(iii)]{k} which states the same is true when $\Gamma$ embeds into a locally compact group $G$ such that $C^*(G)$ is nuclear (e.g., $GL_n(\R)$ for $n\geq 2$, see \cite[Lemma 7.3(i)]{k}).
 Thom demonstrated a counterexample to this statement of Kirchberg in \cite[\S 3]{thom}. As pointed out by Thom, the statement's proof falsely claims that if we define $f_K=1_K/m(K)^{1/2}\in L^2(G)$ for each compact neighbourhood $K$ of the identity in $G$, then the functions $(\sigma_{\lambda_G,\rho_G})_{f_K,f_K}$ converge pointwise to $1_{\Delta_\Gamma}$. As a concrete counterexample and as a matter of interest in relation to the proof of Theorem \ref{factorization property}, we explicitly show that this condition fails for the group $GL_2(\R)$.

\begin{remark}
Consider the compact neighbourhoods
$$ K_n:=\left\{\begin{bmatrix}
a & b \\
c & d
\end{bmatrix} : a,d\in [1-\tfrac{1}{2n},1+\tfrac{1}{2n}],\, b,c\in [-\tfrac{1}{2n},\tfrac{1}{2n}]\right\}\subset GL_2(\R)$$
of the identity for $n\geq 3$. Let $s=\begin{bmatrix}
2 & 0\\
0 & 1
\end{bmatrix}$ and note that
$$s^{-1}\begin{bmatrix}
a & b \\
c & d
\end{bmatrix}s
=
\begin{bmatrix}
a & b/2 \\
2c & d
\end{bmatrix}.$$
Therefore,
$$ K_n\cap s^{-1}K_ns=\left\{\begin{bmatrix}
a & b \\
c & d
\end{bmatrix} : a,d\in [1-\tfrac{1}{2n},1+\tfrac{1}{2n}],\, b\in [-\tfrac{1}{4n},\tfrac{1}{4n}],\,c\in [-\tfrac{1}{2n},\tfrac{1}{2n}] \right\}.$$
Recall that the Haar measure of $GL_2(\R)$ is given by $dt=\frac{1}{(\det t)^2}dm(t)$ where $m$ denotes Lebesgue measure on $\R^4$. It follows that $m(K_n)$ is asymptotic to $\frac{1}{n^4}$ and $m(K_n\cap (s^{-1}K_n s))$ to $\frac{1}{2n^4}$ as $n\to \infty$. So
$$(\sigma_{\lambda_G,\rho_G})_{f_{K_n},f_{K_n}}(s,s)=\frac{1}{m(K_n)}\int 1_{s^{-1}K_ns}(t)1_{K_n}(t)\,dt\to \frac{1}{2}\neq 1.$$
In particular $(\sigma_{\lambda_G,\rho_G})_{f_{K_n},f_{K_n}}$ does not converge pointwise to $1_\Delta$.
\end{remark}

It may be asked whether, perhaps with more careful choices of $K$ for $GL_2(\R)$, one could find a subnet of $(\sigma_{\lambda_G,\rho_G})_{f_K,f_K}$ converging pointwise to $1_{\Delta}$. The following proposition, giving the converse of Theorem \ref{factorization property}, shows this cannot be the case.

\begin{prop}\label{factorization prop}
Let $G$ be a locally compact group and $G_d$ be the group $G$ endowed with the discrete topology. If $\sigma_{\lambda_{G_d},\rho_{G_d}}$ extends to a $*$-representation of $C^*_{\lambda_G}(G_d)\otimes_{\min}C^*_{\lambda_G}(G_d)$, then $G$ is amenable.
\end{prop}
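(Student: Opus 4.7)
The plan is to extract from the hypothesized $*$-representation a net of almost invariant vectors for the left regular representation $\lambda_G$. Write $A=C^*_{\lambda_G}(G_d)$ and let $\tilde\sigma\fn A\otimes_{\min}A\to B(\ell^2(G_d))$ denote the given extension. First, consider the vector state $\varphi=\omega_{\delta_e}\circ\tilde\sigma$ on $A\otimes_{\min}A$. Since $\sigma_{\lambda_{G_d},\rho_{G_d}}(s,t)\delta_e=\delta_{st^{-1}}$, a direct computation gives $\varphi(\lambda_G(s)\otimes\lambda_G(t))=1_\Delta(s,t)$, where $1_\Delta$ is the characteristic function of the diagonal of $G\times G$.

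Next, I exploit the isometric inclusion $A\otimes_{\min}A\hookrightarrow B(L^2(G)\otimes L^2(G))=B(L^2(G\times G))$. The standard weak*-density of vector states in the state space of any unital C*-subalgebra of $B(\Hi)$ yields unit vectors $\xi_i\in L^2(G\times G)$ satisfying
\[
\phi_i(s,t):=\lla(\lambda_G(s)\otimes\lambda_G(t))\xi_i,\xi_i\rra\to 1_\Delta(s,t) \text{ for every } s,t\in G.
\]
Restricted to the diagonal, the functions $s\mapsto\phi_i(s,s)$ are normalized continuous positive definite functions on $G$ (being matrix coefficients of the SOT-continuous representation $s\mapsto\lambda_G(s)\otimes\lambda_G(s)$), and they converge pointwise to the constant function $1$.

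The crucial step is to invoke Raikov's theorem, which says that on the set of normalized continuous positive definite functions on a locally compact group, pointwise convergence to a continuous function coincides with uniform convergence on compact subsets. Thus $\phi_i(s,s)\to 1$ uniformly on compacta of $G$, which translates to $\|(\lambda_G\otimes\lambda_G)(s)\xi_i-\xi_i\|\to 0$ uniformly on compacta. The measure-preserving change of variables $(u,v)\mapsto(u,u^{-1}v)$ induces a unitary on $L^2(G\times G)$ conjugating $(\lambda_G\otimes\lambda_G)$ restricted to the diagonal into $\lambda_G\otimes 1_{L^2(G)}$, an amplification of $\lambda_G$. Hence $\lambda_G$ has almost invariant vectors uniformly on compacta, i.e., $\lambda_G$ weakly contains the trivial representation of $G$, so $G$ is amenable by the Hulanicki--Reiter theorem.

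The main obstacle is bridging a hypothesis formulated for $G$ as a discrete group -- giving only pointwise convergence -- to the desired conclusion of amenability of the topological group $G$, which demands uniform convergence on compacta. This is overcome by approximating $\varphi$ via vector states drawn from $L^2(G\times G)$ (rather than from $\ell^2(G_d\times G_d)$): the fact that $A$ is defined via the continuous representation $\lambda_G$ on $L^2(G)$ forces the approximating positive definite functions to be continuous in the original topology of $G$, which allows Raikov's theorem to upgrade pointwise convergence to uniform convergence on compacta.
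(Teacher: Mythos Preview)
Your strategy coincides with the paper's: realize $1_\Delta$ as a pointwise limit of positive definite functions associated with $\lambda_G\times\lambda_G$, restrict to the diagonal, and deduce amenability. However, the appeal to Raikov's theorem is a genuine gap. The assertion that on $P_1(G)$ pointwise convergence to a continuous limit agrees with uniform convergence on compacta is a theorem for \emph{sequences}; it fails for nets. For a concrete counterexample on $G=\T$, index by pairs $(F,n)$ with $F\subset\T$ finite and $n\in\N$, and put $\phi_{F,n}(t)=\cos(N_{F,n}t)$ where $N_{F,n}\geq 1$ is chosen (by simultaneous Diophantine approximation) so that $|\cos(N_{F,n}t)-1|<1/n$ for every $t\in F$. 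Then $\phi_{F,n}\to 1$ pointwise, yet each $\phi_{F,n}$ attains the value $-1$. Since $C^*_{\lambda_G}(G_d)$ is non-separable for non-discrete $G$ (the unitaries $\lambda_G(s)$ are typically at mutual distance $2$), one cannot reduce to sequences. There is also a smaller slip earlier: vector states $\omega_\xi$ on a unital $C^*$-subalgebra of $B(\Hi)$ need not be weak*-dense in the state space (consider the trace on $M_2\subset B(\C^2)$); one must allow finite sums $\sum_k\omega_{\xi_k}$, as the paper does.

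The paper sidesteps the Raikov issue by quoting Reiter's weak property $(P_2^*)$ (see B\'edos~\cite{be}): if there is a net of normalized positive definite functions of the form $v_i=\sum_k\langle\lambda_G(\cdot)g_k^{(i)},g_k^{(i)}\rangle$ converging merely \emph{pointwise} to $1_G$, then $G$ is amenable. This can be seen directly from your setup without any uniformity: the probability densities $\mu_i=\sum_k|g_k^{(i)}|^2\in L^1(G)$ satisfy $\|s\cdot\mu_i-\mu_i\|_1\to 0$ for each fixed $s\in G$, and any weak* cluster point of $(\mu_i)$ in $L^\infty(G)^*$ is then a left-invariant mean. Substituting this argument for the Raikov step repairs your proof and brings it in line with the paper's.
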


\begin{proof}
Since the representation $\sigma_{\lambda_{G_d},\rho_{G_d}}: G_d \times G_d \to B(\ell^2(G_d))$ extends to a $*$-representation of $C^*_{\lambda_G}(G_d)\otimes_{\min}C^*_{\lambda_G}(G_d)$, it is weakly contained in   $\lambda_G \times \lambda_G : G_d \times G_d \to B(L^2(G \times G))$ and thus 
 there exists a net of positive definite functions $u_i$ of the form $\sum_{k=1}^n (\lambda_G\times\lambda_G)_{f_k,f_k}$ for $f_k\in L^2(G\times G)$ such that $u_i$  converges pointwise to $1_{\Delta} = (\sigma_{\lambda_{G_d},\rho_{G_d}})_{\delta_e, \delta_e}$. Since the restriction $\lambda_G\times\lambda_G|_\Delta$ is unitarily equivalent to an amplification of $\lambda_\Delta$ by Fell's absorption principle, there exists positive definite functions $v_i$ on $G_d$ of the form $\sum_{k=1}^n (\lambda_G)_{g_k,g_k}$ where $g_k\in L^2(G)$ converging pointwise to $1_{G}$.
So $G$ is amenable by Reiter's weak property $(P_2^*)$ (see \cite[Theorem 1]{be} for a nice proof).
\end{proof}

Finally, we observe that the techniques used in this section give rise to a simple proof that a SIN group $G$ is amenable if and only if $C^*_r(G)$ is nuclear. Recall that a SIN group is a locally compact group $G$ which has a neighbourhood base of the identity consisting of compact neighbourhoods $K$ such that $s^{-1}Ks=K$ for every $s\in G$. 
We note that SIN  groups are  all unimodular, i.e.  $\Delta(s) = 1$ for all $s \in G$.
It is clear that  discrete groups are  all SIN.
This equivalence between nuclearity of $C_r^*(G)$ and amenability of $G$ is known to hold in more general contexts than SIN groups. For example, Lau and Paterson showed this equivalence holds within the more general class of inner amenable groups (see \cite[Corollary 3.2]{lp}).

\begin{prop}
Let $G$ be a SIN group. The following are equivalent:
\begin{enumerate}
\item $G$ is amenable
\item $C^*_r(G)$ is nuclear
\item $C^*_r(G)\otimes_{\min}C^*_r(G)=C^*_r(G)\otimes_{\max}C^*_r(G)$ canonically
\item $\sigma_{\lambda_{G_d},\rho_{G_d}}$ extends to a $*$-representation of $C^*_{\lambda_G}(G_d)\otimes_{\min}C^*_{\lambda_G}(G_d)$
\end{enumerate}
\end{prop}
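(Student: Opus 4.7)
My plan is to prove the implications cyclically: $(1)\Rightarrow(2)\Rightarrow(3)\Rightarrow(4)\Rightarrow(1)$. Three of these are already in hand: $(1)\Rightarrow(2)$ is the classical fact that $C^*_r(G)=C^*(G)$ is nuclear for amenable locally compact $G$; $(2)\Rightarrow(3)$ is immediate from the definition of nuclearity; and $(4)\Rightarrow(1)$ is exactly Proposition \ref{factorization prop}. Only $(3)\Rightarrow(4)$ requires real work, and this is where the SIN hypothesis must enter.

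For $(3)\Rightarrow(4)$, the SIN hypothesis lets me fix a neighbourhood basis $\{V_i\}$ of $e$ by compact symmetric conjugation-invariant sets and put $f_i=1_{V_i}/m(V_i)\in L^1(G)$. Then $\{f_i\}$ is a positive, symmetric, conjugation-invariant approximate identity for $L^1(G)$ with $\|f_i\|_1=1$; correspondingly $\lambda(f_i)\in C^*_r(G)$ is self-adjoint with $\|\lambda(f_i)\|\leq 1$ and $\lambda(f_i)\to 1$ in SOT (and similarly for $\rho(f_i)$). Since $\lambda(f_i)\lambda(s)=\lambda(f_i*\delta_s)\in\lambda(L^1(G))$ for every $s\in G$, the compression
\[\phi_i\fn C^*_{\lambda_G}(G_d)\to C^*_r(G),\qquad \phi_i(a)=\lambda(f_i)\,a\,\lambda(f_i),\]
is a well-defined completely positive map of cb-norm at most $1$.

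Let $\pi\fn C^*_{\lambda_G}(G_d)\odot C^*_{\lambda_G}(G_d)\to B(L^2(G))$ denote the $*$-homomorphism satisfying $\pi(\lambda(s)\otimes\lambda(t))=\lambda(s)\rho(t)$, which is well defined because $L(G)$ and $R(G)$ commute elementwise; let $\pi_r$ be the analogous map on $C^*_r(G)\odot C^*_r(G)$. Using the commutation of $L(G)$ with $R(G)$ one directly computes
\[\pi_r\bigl((\phi_i\otimes\phi_i)(z)\bigr)=[\lambda(f_i)\rho(f_i)]\,\pi(z)\,[\lambda(f_i)\rho(f_i)]\]
for $z$ in the algebraic tensor product. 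Now $\pi_r$ automatically extends to the maximal tensor product and, by $(3)$, to the minimal one; combining with CP-functoriality of $\otimes_{\min}$ gives $\|\pi_r((\phi_i\otimes\phi_i)(z))\|\leq\|z\|_{\min,C^*_{\lambda_G}(G_d)}$. Since $\lambda(f_i)\rho(f_i)\to 1$ in SOT and the operator norm is SOT-lower-semicontinuous, passing to the limit yields $\|\pi(z)\|\leq\|z\|_{\min}$, so $\pi$ extends to $C^*_{\lambda_G}(G_d)\otimes_{\min}C^*_{\lambda_G}(G_d)$.

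To pass from $\pi$ to $\sigma_{\lambda_{G_d},\rho_{G_d}}$, I would take $\xi_V=1_V/m(V)^{1/2}$ with $V$ compact symmetric conjugation-invariant and observe that $\langle\pi(s,t)\xi_V,\xi_V\rangle=m(V\cap sVt^{-1})/m(V)$ converges pointwise to $1_\Delta$ as $V\to\{e\}$: SIN-invariance forces the value $1$ when $s=t$, while shrinking $V$ separates distinct $s\neq t$ to yield $0$. Since $1_\Delta$ is the canonical matrix coefficient of $\sigma$ at $\delta_e$, this gives $\sigma\prec\pi$, and the extension of $\pi$ descends to one of $\sigma$, establishing $(4)$. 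The main obstacle is the sandwich step: ensuring $\phi_i$ lands in $C^*_r(G)$ with bounded cb-norm and the clean intertwining formula both rely essentially on the conjugation-invariance afforded by the SIN hypothesis, which is precisely what allows one to transfer the $C^*_r(G)$-level information of $(3)$ to the strictly larger algebra $C^*_{\lambda_G}(G_d)$ required by $(4)$.
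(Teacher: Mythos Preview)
Your proof is correct and follows the same cyclic scheme $(1)\Rightarrow(2)\Rightarrow(3)\Rightarrow(4)\Rightarrow(1)$ as the paper, with the same handling of the three easy implications. In the substantive step $(3)\Rightarrow(4)$, both you and the paper exploit the SIN hypothesis to show that the coefficients $\langle\sigma_{\lambda_G,\rho_G}(s,t)\xi_V,\xi_V\rangle$, with $\xi_V=1_V/m(V)^{1/2}$ for conjugation-invariant $V$, converge pointwise to $1_\Delta$; the paper stops at that computation and simply asserts that $(4)$ follows. Your argument goes further: you interpose the compressions $\phi_i(a)=\lambda(f_i)\,a\,\lambda(f_i)$, which land in $C^*_r(G)$ because $f_i*\delta_s*f_i\in L^1(G)$, in order to transfer the hypothesis $(3)$---which is stated only at the level of $C^*_r(G)$---up to the strictly larger algebra $C^*_{\lambda_G}(G_d)$, and thereby explicitly prove that $\sigma_{\lambda_G,\rho_G}|_{G_d\times G_d}$ is $\min$-bounded before invoking the weak-containment step. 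This sandwich trick makes transparent exactly where $(3)$ enters; the paper leaves that passage implicit (it can alternatively be justified by noting $C^*_{\lambda_G}(G_d)\subseteq M(C^*_r(G))$ and using that $M(A)\otimes_{\min}M(B)\hookrightarrow M(A\otimes_{\min}B)$, but this is not written out). So your route is essentially the paper's, with the one under-explained step replaced by a clean CP-compression argument.
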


\begin{proof}
(1) $\Rightarrow$ (2): This is well known.

(2) $\Rightarrow$ (3): This is trivial.

(3) $\Rightarrow$ (4): For each compact neighbourhood $K$ of the identity which is invariant under inner automorphisms, define $f_K=1_K/m(K)^{1/2}\in L^2(G)$. Using a similar but simpler argument as that used in the proof of Theorem \ref{factorization property}, we will show that $(\sigma_{\lambda_G,\rho_G})_{f_K,f_K}$ converges pointwise to $1_\Delta$.

Observe that for $s_1,s_2\in G$ we have 
\begin{eqnarray*}
(\sigma_{\lambda_G,\rho_G})_{f_K,f_K}(s_1,s_2) &=& \frac{1}{m(K)}\int 1_K(s_1^{-1}ts_2)1_K(t)\,dt \\
&=& \frac{1}{m(K)}\int 1_{s_1K s_2^{-1}}(t)1_K(t)\,dt\\
&=& \frac{1}{m(K)}\int 1_{s_1s_2^{-1}  K }(t)1_K(t)\,ds.
\end{eqnarray*}
So if $s_1\neq s_2$ then, by taking $K$ small enough that $s_2s_1^{-1}\not\in K$, we have that $(\sigma_{\lambda_G,\rho_G})_{f_K,f_K}(s_1,s_2)=0$. Further, if $s_1=s_2$ then $(\sigma_{\lambda_G,\rho_G})_{f_K,f_K}(s_1,s_2)=\frac{1}{m(K)}\int 1_K(t)^2\,dt=1$. Hence, $(\sigma_{\lambda_G,\rho_G})_{f_K,f_K}$ converges pointwise to $1_\Delta$ as $K\to \{e\}$. It follows that $\sigma_{\lambda_{G_d},\rho_{G_d}}$ extends to a $*$-representation of $C^*_{\lambda_G}(G_d)$.

(4) $\Rightarrow$ (1): This is clear from Proposition \ref{factorization prop}.
\end{proof}

\section{Local Properties of Exotic Group C*-algebras}

In this section, we show that a large class of exotic group C*-algebras have poor local properties. In particular we show that these C*-algebras are not locally reflexive and, hence, not exact, and do not have the local lifting property.
This class of C*-algebras we consider are those of the form $C^*_{\pi}(\Gamma)$ where $\Gamma$ is a non-amenable discrete group and $\pi$ is a representation of $\Gamma$ such that:
\begin{enumerate}
\item[(i)] $\Gamma$ embeds algebraically into an amenable group $G$,
\item[(ii)] $\pi\otimes(\lambda_G|_\Gamma)$ is weakly contained in $\pi$.
\end{enumerate}
We will additionally stipulate that either $C^*_\pi(\Gamma)\neq C^*_r(\Gamma)$ canonically or $C^*_\pi(\Gamma)\neq C^*(\Gamma)$ canonically, depending on the theorem. Exotic group C*-algebras satisfying conditions (i) and (ii) are always included.

We note that for such C*-algebras $C^*_\pi(\Gamma)$ that $C^*_r(\Gamma)$ is necessarily a canonical quotient of $C^*_\pi(\Gamma)$. Indeed, suppose that $\Gamma$ and $\pi$ satisfy the conditions above. Then, since $\lambda_G|_\Gamma$ is known to weakly contain $\lambda$ (see \cite[Lemma 2]{be}), the representation $\pi$ must weakly contain the left regular representation as $1\otimes\lambda_\Gamma\cong \pi\otimes \lambda_\Gamma\prec \pi\otimes\lambda_G|_\Gamma\prec \pi$, where $\prec$ denotes weak containment.

Though our list of criterion for $C^*_\pi(\Gamma)$ may seem a little odd at first, we observe below that it includes many interesting examples of exotic group C*-algebras.

\begin{example}\ 
\begin{enumerate}
\item Suppose $\Gamma$ is a non-amenable discrete group which embeds algebraically into an amenable group $G$ and $D$ an algebraic ideal of $\ell^\infty(\Gamma)$ such that $C^*_D(\Gamma)\neq 0$, i.e., such that $\Gamma$ admits a $D$-representation. Then $C^*_D(\Gamma)$ is such an example. Indeed, let $\pi$ be a $D$-representation of $\Gamma$ such that $\pi$ extends to a faithful $*$-representation of $C^*_D(\Gamma)$ or, equivalently, $C^*_\pi(\Gamma)=C^*_D(\Gamma)$ canonically. Then $\pi\otimes\lambda_G|_\Gamma$ is weakly contained in $\pi$ since the tensor product of any $D$-representation with an arbitrary representation is a $D$-representation. So indeed, $C^*_D(\Gamma)$ is then an example of this form.
As particular cases of this type, we point out the following:
\begin{enumerate}
\item Let $\Gamma=\F_d$ be a noncommutative free group on $2\leq d\leq\infty$ generators. Then $C^*_{\ell^p}(\Gamma)$ is such an exotic group C*-algebra for $2<p<\infty$ since $\Gamma$, being residually finite, embeds into a compact group (recall that residually finite groups are maximally almost periodic, and maximally periodic group embed into compact groups, see \cite[Chapter 16]{d}).
\item Let $\Gamma$ be $SL_n(\Z)$ for $n\geq 3$ or $Sp_n(\Z)$ for $n\geq 2$. Then $C^*_{c_0}(\Gamma)$ is an exotic group C*-algebra of this type since
\begin{itemize}
\item $\Gamma$ is residually finite,
\item $C^*_{c_0}(\Gamma)\neq C^*(\Gamma)$ canonically as $\Gamma$ does not have the Haagerup property,
\item $C^*_{c_0}(\Gamma)\neq C^*_r(\Gamma)$ since $c_0(\Gamma)$ contains $\ell^p(\Gamma)$ for every $1\leq p<\infty$ and $\Gamma$ contains a copy of $\F_2$.
\end{itemize}
\end{enumerate}
\item Let $\Gamma$ be a non-amenable maximally almost periodic discrete group and $\mc F$ be the direct sum of all finite dimensional representations of $\Gamma$. Then $\Gamma$ embeds into a compact group $G$ and $\mc F\otimes \lambda_G|_\Gamma$ is weakly contained in $\mc F$ since $\lambda_G$ decomposes into a direct sum of finite dimensional representations. Further, $\mc F$ is not equivalent to $\lambda$ since $\mc F$ contains the trivial representation. So $C^*_{\mc F}(\Gamma)$ is an example of this type. It is clear that $C^*_{\mc F}(\Gamma)$ is an exotic group C*-algebra if and only if $C^*(\Gamma)$ fails to be residually finite dimensional. Since Bekka has demonstrated a large class of residually finite groups whose full group C*-algebras are not residually finite dimensional (see \cite{b}), there is no shortage of examples of this type. Some examples include $SL_n(\Z)$ for $n\geq 3$ and $Sp_n(\Z)$ for $n\geq 2$.
\item Let $G$ be a locally compact group and $\Gamma=G_d$ be the group $G$ endowed with the discrete topology. Then $C^*_r(\Gamma)\neq C^*_{\lambda_G}(G_d)$ if $G_d$ is non-amenable since the trivial representation extends to a $*$-representation of $C^*_{\lambda_G}(G_d)$ but not of $C^*_r(G_d)$. C*-algebras of this type have been studied by Bekka, Kaniuth, Lau and Schlichting in \cite{bkls} when $G$ is not necessarily amenable. Determining when $C^*_{\lambda_G}(G_d)= C^*(G_d)$ canonically remains an open problem, but it is conjectured that this should hold if and only if $G$ admits an open subgroup $H$ such that $H_d$ is amenable (see \cite[Remark 1]{bkls}).
The conjecture is known to hold when $G$ is a connected Lie group (see \cite{bv}) and, so, this construction gives an exotic group C*-algebra, for example, when $G=SO_n(\R)$ for $n\geq 3$ or $SU_n(\C)$ for $n\geq 2$. We note that the C*-algebra $C^*_{\lambda_G}(G_d)$ is also denoted by $C^*_\delta(G)$ in the literature.
\end{enumerate}
\end{example}

We will now demonstrate the failure of local properties for this class of C*-algebras.

\subsection*{Local Reflexivity}

Let us first recall that a  C*-algebra (or an operator space) $A$  is {\em locally reflexive} if for any finite dimensional operator space $E$ and complete contraction $T : E \to A^{**}$, there exists a net of complete contractions $T_i : E \to A$ such that $T_i \to T$ in point-weak* topology. 
Local reflexivity is a very important operator space local property which is equivalent to Property $C^{\prime \prime}$ introduced by Archbold and Batty (see \cite{ab}). We will show that the class of C*-algebras we are considering fail to be locally reflexive. Towards this goal, we recall that locally reflexive C*-algebras satisfy the following property (see \cite[Proposition 5.3]{eh} or \cite[Proposition 14.3.7]{er}).

\begin{proposition} \label{localthm}
If $B$ is a locally reflexive C*-algebra, then the sequence
\[
0 \to J \otimes_{\rm min} C \to B \otimes_{\rm min} C \to B/J \otimes_{\rm min}  C \to 0
\]
is exact for every closed two-sided ideal $J$ of $B$ and every C*-algebra $C$.
\end{proposition}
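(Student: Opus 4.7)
My proof plan is the following. Injectivity of the minimal tensor product gives that $J \otimes_{\min} C \hookrightarrow B \otimes_{\min} C$ is isometric, and surjectivity of $\pi \otimes \mathrm{id}_C : B \otimes_{\min} C \to (B/J) \otimes_{\min} C$ (where $\pi : B \to B/J$ is the canonical quotient) is automatic. The only substantive claim is middle exactness, $\ker(\pi \otimes \mathrm{id}_C) \subseteq J \otimes_{\min} C$.

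I would first reduce to Tomiyama's slice-map property. For each $\omega \in C^*$, the right slice map $R_\omega : B \otimes_{\min} C \to B$ characterized by $R_\omega(b \otimes c) = \omega(c) b$ is completely bounded and natural in the first variable, so it intertwines $\pi \otimes \mathrm{id}_C$ with $\pi$. Since elementary functionals $\phi \otimes \omega$ separate points in the minimal tensor product, $(\pi \otimes \mathrm{id}_C)(x) = 0$ is equivalent to $R_\omega(x) \in \ker \pi = J$ for every $\omega \in C^*$. Exactness of the sequence is therefore equivalent to the slice-map identity
\[
J \otimes_{\min} C \;=\; \{\, x \in B \otimes_{\min} C : R_\omega(x) \in J \text{ for every } \omega \in C^* \,\}.
\]

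To establish this identity using local reflexivity I would proceed as follows. Let $x$ satisfy $R_\omega(x) \in J$ for every $\omega$, and approximate $x$ in norm by finite sums $x_n = \sum_i b_i^{(n)} \otimes c_i^{(n)}$. Put $E_n = \mathrm{span}\{b_i^{(n)}\} \subseteq B$. Taking $z \in Z(B^{**})$ to be the central projection with $zB^{**} = J^{**}$ (so that $B^{**} = J^{**} \oplus (B/J)^{**}$ as a von Neumann direct sum), the map $\phi_n : E_n \to B^{**}$ defined by $b \mapsto zb$ is a complete contraction into the second dual. Local reflexivity of $B$ supplies a net of complete contractions $\phi_n^\alpha : E_n \to B$ converging to $\phi_n$ in the point-weak-$*$ topology; a Kaplansky-density refinement applied to $J \subseteq J^{**}$ then lets us perturb the $\phi_n^\alpha$ so that their images lie in $J$ itself while preserving the convergence. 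The tensors $y_n^\alpha = \sum_i \phi_n^\alpha(b_i^{(n)}) \otimes c_i^{(n)}$ belong to $J \otimes_{\min} C$ and, along a suitably chosen diagonal subnet, should approximate $x_n$ sufficiently well to place $x$ in $J \otimes_{\min} C$.

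The principal obstacle is exactly this last step: upgrading point-weak-$*$ convergence $\phi_n^\alpha \to \phi_n$ on the finite-dimensional subspaces $E_n$ to norm convergence of $y_n^\alpha \to x_n$ in $B \otimes_{\min} C$. The hypothesis that every $R_\omega(x)$ lies in $J$ manifests as the smallness of the $B/J$-tensor $(\pi \otimes \mathrm{id}_C)(x_n) = \sum_i (1-z) b_i^{(n)} \otimes c_i^{(n)}$ for large $n$, and this has to be coupled with the $B$-level approximations from local reflexivity to close the gap. This coupling of finite-dimensional approximations with the tensorial structure is precisely the Archbold--Batty Property $C''$ content of local reflexivity (see \cite{ab}), which is why a weaker approximation hypothesis on $B$ would not suffice.
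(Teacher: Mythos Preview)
The paper does not actually prove this proposition: it is stated as a known result with citations to \cite[Proposition 5.3]{eh} and \cite[Proposition 14.3.7]{er}, and no argument is given. So there is no ``paper's own proof'' to compare against; I can only assess your sketch on its own terms.

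Your reduction to the slice-map property is correct and is indeed the standard first step. The difficulty, as you yourself flag, is the second half, and there your argument is not yet a proof. Two specific gaps: First, the ``Kaplansky-density refinement'' you invoke does not do what you need. Kaplansky density lets you approximate elements of $J^{**}$ by elements of $J$ in the weak-$*$ topology, but your maps $\phi_n^\alpha$ land in $B$, not in $J^{**}$, and there is no bounded projection $B\to J$ to compose with. The clean way around this is to use that local reflexivity passes to closed ideals, so one may apply the approximation hypothesis directly to $J$ and obtain complete contractions $E_n\to J$ approximating $b\mapsto zb$; but that inheritance fact is itself part of the Effros--Haagerup package and needs to be invoked explicitly. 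Second, even granting maps $\psi_n^\alpha:E_n\to J$ with $\psi_n^\alpha\to(b\mapsto zb)$ point-weak-$*$, you still have to show that $(\psi_n^\alpha\otimes\mathrm{id})(x_n)$ approximates $x$ in \emph{norm}. Your last paragraph correctly identifies this as the crux and then defers to ``the Archbold--Batty Property $C''$ content of local reflexivity,'' which is circular: Property $C''$ is exactly the statement you are trying to prove. The actual mechanism in \cite{eh} is that local reflexivity gives an isometric embedding $B^{**}\odot C\hookrightarrow (B\otimes_{\min}C)^{**}$ for the min-norm, from which one reads off that $(z\otimes 1)x=x$ already holds inside $(J\otimes_{\min}C)^{\perp\perp}\cap(B\otimes_{\min}C)=J\otimes_{\min}C$. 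Your outline is heading in the right direction but stops short of this identification.
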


Let $A \subseteq  B(H)$ be a unital C*-algebra on a Hilbert space $H$. 
A trace $\tau $ on $A$ is called an {\em amenable trace} if there is a state $\varphi$ on $B(H)$ such that $\varphi|_A = \tau$ and $\varphi(u T u^*) = \varphi(T)$ for all unitary operators $u\in A$ and $T \in B(H)$.
We note that the definition of amenable trace is independent from the choice of Hilbert space $H$ and a trace $\tau$ on $A$ is an amenable trace if and only if the positive linear functional 
\[
\mu_\tau : a \otimes b^{\tn{op}} \in A \otimes A^{\tn{op}} \mapsto  \tau(ab) \in \mathbb C
\]
is continuous with respect to the minimal tensor product norm (see \cite [ Theorem 6.2.7]{bo}).

The authors are grateful to an anonymous referee for suggesting the inclusion of the following Theorem which generalizes Corollary \ref{nonlocalcor}.

\begin{theorem}\label{nonlocal}
Let $\Gamma$ be a non-amenable discrete group with the factorization property and $\sigma$ a representation of $\Gamma$ which weakly contains $\lambda$ such that the canonical trace $\tau_\lambda$ is an amenable trace of $C^*_\sigma(\Gamma)$. If $\pi$ is a representation which weakly contains but is not weakly equivalent to $\lambda$, then $C^*_{\pi\otimes\sigma}(\Gamma)$ is not locally reflexive.
\end{theorem}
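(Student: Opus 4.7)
My plan is to assume $B := C^*_{\pi \otimes \sigma}(\Gamma)$ is locally reflexive and derive a contradiction through Proposition~\ref{localthm}, applied with $J := \ker(B \twoheadrightarrow C^*_r(\Gamma))$ and $C := C^*_\sigma(\Gamma)^{\tn{op}} \cong C^*_{\bar\sigma}(\Gamma)$. The hypothesis would give exactness of
\[
0 \to J \otimes_{\min} C^*_{\bar\sigma}(\Gamma) \to B \otimes_{\min} C^*_{\bar\sigma}(\Gamma) \to C^*_r(\Gamma) \otimes_{\min} C^*_{\bar\sigma}(\Gamma) \to 0.
\]
The core of the argument is the construction, for every unit vector $\xi \in H_\pi$, of a state $\omega_\xi$ on $B \otimes_{\min} C^*_{\bar\sigma}(\Gamma)$ satisfying
$
\omega_\xi\bigl((\pi\otimes\sigma)(s) \otimes \bar\sigma(t)\bigr) = \langle \pi(s)\xi,\xi\rangle\,\delta_{s,t}.
$

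To build $\omega_\xi$, I use the amenable-trace hypothesis: via the Powers--Stormer inequality, $\tau_\lambda$ being amenable on $C^*_\sigma(\Gamma)$ yields a net of unit Hilbert--Schmidt vectors $T_i^{1/2} \in H_\sigma \otimes \bar H_\sigma$ with $\|\sigma(s) T_i^{1/2} - T_i^{1/2} \sigma(s)\|_2 \to 0$ and $\tn{tr}(T_i \sigma(s)) \to \delta_{s,e}$ for each $s \in \Gamma$. Since $\pi \otimes \sigma$ and $\bar\sigma$ act on separated Hilbert spaces, they determine a $*$-representation of $B \otimes_{\min} C^*_{\bar\sigma}(\Gamma)$ on $H_\pi \otimes H_\sigma \otimes \bar H_\sigma$. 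Computing the matrix coefficient at $\xi \otimes T_i^{1/2}$ and using approximate centrality to absorb $\sigma(s)$ across $T_i^{1/2}$,
\[
\langle \pi(s)\xi,\xi\rangle\,\tn{tr}\bigl(T_i^{1/2}\sigma(s)T_i^{1/2}\sigma(t)^*\bigr) \longrightarrow \langle \pi(s)\xi,\xi\rangle\,\delta_{s,t}.
\]
Passing to a weak-* cluster point of these matrix-coefficient states produces $\omega_\xi$ with the asserted values on generators.

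Next, I verify that $\omega_\xi$ annihilates $J \otimes_{\min} C^*_{\bar\sigma}(\Gamma)$: any $a \in J$ is the norm-limit of elements $(\pi\otimes\sigma)(f_n)$ with $\|\lambda(f_n)\|_r \to 0$, hence $f_n(s) \to 0$ pointwise. For finitely supported $g \in \C[\Gamma]$, $\omega_\xi\bigl((\pi\otimes\sigma)(f_n) \otimes \bar\sigma(g)\bigr) = \sum_s f_n(s)g(s)\langle \pi(s)\xi,\xi\rangle \to 0$, and continuity plus density complete the vanishing. By the assumed exactness, $\omega_\xi$ descends to a bounded functional $\bar\omega_\xi$ on $C^*_r(\Gamma) \otimes_{\min} C^*_{\bar\sigma}(\Gamma)$ with $\bar\omega_\xi(\lambda(s) \otimes \bar\sigma(t)) = \langle \pi(s)\xi,\xi\rangle\,\delta_{s,t}$. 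By Fell's absorption, $\lambda \otimes \bar\sigma$ is weakly equivalent to $\lambda$, so the diagonal map $\iota: C^*_r(\Gamma) \to C^*_r(\Gamma) \otimes_{\min} C^*_{\bar\sigma}(\Gamma)$, $\lambda(s) \mapsto \lambda(s)\otimes \bar\sigma(s)$, is a well-defined $*$-homomorphism. Then $\bar\omega_\xi \circ \iota$ is a bounded linear functional on $C^*_r(\Gamma)$ with $(\bar\omega_\xi \circ \iota)(\lambda(s)) = \langle \pi(s)\xi,\xi\rangle$, so the positive definite function $s \mapsto \langle\pi(s)\xi,\xi\rangle$ extends to a state on $C^*_r(\Gamma)$. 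Therefore the cyclic subrepresentation of $\pi$ generated by $\xi$ is weakly contained in $\lambda$, and letting $\xi$ vary over $H_\pi$ forces $\pi \prec \lambda$. Combined with $\pi \succ \lambda$, this gives $\pi \sim \lambda$, contradicting the hypothesis.

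The main obstacle is the construction of $\omega_\xi$ as a state on the \emph{minimal} tensor product with the correct asymptotic matrix coefficients; this is where the amenable-trace hypothesis, through the Powers--Stormer estimate controlling the commutators $\sigma(s)T_i^{1/2} - T_i^{1/2}\sigma(s)$, is used essentially. Once $\omega_\xi$ is in hand, the vanishing on $J \otimes_{\min} C^*_{\bar\sigma}(\Gamma)$, the descent via local reflexivity, and the final application of Fell's absorption to extract a functional on $C^*_r(\Gamma)$ are comparatively routine.
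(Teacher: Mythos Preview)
Your argument is correct and follows essentially the same route as the paper's. Both proofs apply Proposition~\ref{localthm} with $J=\ker(C^*_{\pi\otimes\sigma}(\Gamma)\to C^*_r(\Gamma))$ and $C=C^*_\sigma(\Gamma)^{\mathrm{op}}$, use the amenable-trace hypothesis to produce min-bounded functionals on $B\otimes_{\min}C$ whose diagonal restrictions recover the coefficients of $\pi$, and then derive $\pi\prec\lambda$ via Fell absorption on the quotient $C^*_r(\Gamma)\otimes_{\min}C$.

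The only difference is packaging. The paper invokes the characterization of amenable traces from \cite[Theorem~6.2.7]{bo} directly to get min-continuity of $s\otimes t\mapsto 1_\Delta(s,t)$, then passes through GNS to the representation $\mu(s,t)=\pi(s)\otimes\lambda(s)\rho(t)$ and argues with weak containment of representations. You instead unwind the amenable trace via approximating Hilbert--Schmidt vectors and Powers--St{\o}rmer, and argue state-by-state with $\omega_\xi$. Your $\omega_\xi$ is precisely the matrix coefficient $\langle\mu(\cdot,\cdot)(\xi\otimes\delta_e),\xi\otimes\delta_e\rangle$ of the paper's $\mu$, so the two arguments are dual to one another; the paper's formulation is slightly more economical since it cites the min-continuity characterization rather than rebuilding it.
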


\begin{proof}



Since $\tau_\lambda$ is an amenable trace of $C^*_\sigma(\Gamma)$, the functional on $C^*_\sigma(\Gamma)\odot C^*_\sigma(\Gamma)^{\mathrm{op}}$ defined by $a\otimes b^{\mathrm{op}}\mapsto \tau_\lambda(ab)$ is continuous with respect to the minimal C*-norm (see \cite[Theorem 6.2.7]{bo}). Let $\sigma^{\mathrm{op}}\fn G\to C^*_\sigma(\Gamma)^{\mathrm{op}}$ denote the representation defined by $\sigma^{\mathrm{op}}(s)=\sigma(s^{-1})^\mathrm{op}$. Since $C^*_{\sigma^\mathrm{op}}(\Gamma)=C^*_{\sigma}(\Gamma)^\mathrm{op}$, we then have that the map $s\otimes t\mapsto \tau_\lambda(st^{-1})=1_\Delta(s,t)$ (for $s,t\in\Gamma$) extends a bounded linear functional of $C^*_\sigma(\Gamma)\otimes_{\min}C^*_{\sigma^{\mathrm{op}}}(\Gamma)$. So $\pi_{\lambda,\rho}$, by virtue of being the GNS representation of $1_\Delta$, extends to a $*$-representation of $C^*_\sigma(\Gamma)\otimes_{\min}C^*_{\sigma^{\mathrm{op}}}(\Gamma)$ and, hence, the representation $\mu\fn \Gamma\times\Gamma\to B(H_\pi\otimes \ell^2(\Gamma))$ defined by
$$\mu(s,t)=\pi(s)\otimes \lambda(s)\rho(t)$$
extends to $*$-representation of $C^*_{\pi\otimes\sigma}(\Gamma)\otimes_{\min} C^*_{\sigma^{\mathrm{op}}}(\Gamma)$. 

Recall that since $\pi$ weakly contains $\lambda$, the representation $\pi\otimes \sigma$ weakly contains $\lambda\otimes \sigma$ and thus weakly contains $\lambda$ since $\lambda \otimes \sigma$ is unitarily equivalent to an amplification of $\lambda$ by Fell's absorption principle. 
In this case,  we have $\|\pi\otimes \sigma(\cdot)\| \ge \|\lambda(\cdot)\|$ on ${\mathbb C}[\Gamma]$  (see \cite[Proposition 18.1.4]{d}) and thus 
the representation $\lambda$ extends canonically  to a $*$-representation $\widetilde\lambda\fn C^*_{\pi\otimes\sigma}(\Gamma)\to C^*_r(\Gamma)$. 
Observe that $\|\widetilde \lambda (\cdot)\| \ge\|\mu|_{\Gamma\times \{e\}} (\cdot)\|$ on $C^*_{\pi\otimes\sigma}(\Gamma)$ since 
$\mu|_{\Gamma\times \{e\}}=\pi\otimes \lambda$ is unitarily equivalent to an amplification of $\lambda$. Therefore 
$\ker\mu|_{\Gamma\times \{e\}}$ contains $\ker\widetilde\lambda$ in
$C^*_{\pi \otimes \sigma}(\Gamma)$, and thus
$\ker\mu$ contains $\ker\widetilde\lambda\otimes_{\min} C^*_{\sigma^{\mathrm{op}}}(\Gamma)$ 
when $\mu$ is viewed as a $*$-representation of $C^*_{\pi\otimes\sigma}(\Gamma)\otimes_{\min} C^*_{\sigma^{\mathrm{op}}}(\Gamma)$.

Suppose towards a contradiction that $C^*_{\pi\otimes\sigma}(\Gamma)$ is locally reflexive. Then 
$$\ker\big(\widetilde\lambda\otimes\mathrm{id}\fn C^*_{\pi\otimes\sigma}(\Gamma)\otimes_{\min} C^*_{\sigma^{\mathrm{op}}}(\Gamma)\to C^*_r(\Gamma)\otimes_{\min}C^*_{\sigma^{\mathrm{op}}}(\Gamma)\big) = \ker\widetilde\lambda\otimes_{\min} C^*_{\sigma^{\mathrm{op}}}(\Gamma)$$
by Proposition \ref{localthm} and, thus,  $\mu$ is weakly contained in $\lambda\times \sigma^{\mathrm{op}}$. So if $\Delta\fn \Gamma\to \Gamma\times\Gamma$ is the diagonal embedding, then $\mu\circ \Delta$ is weakly contained in $(\lambda\times \sigma^{\mathrm{op}})\circ \Delta=\lambda\otimes \sigma^{\mathrm{op}}\cong\lambda^{\oplus\alpha}$ for some cardinal $\alpha$. But $\mu\circ\Delta$ contains $\pi$ as a subrepresentation which contradicts our assumption that $\pi$ is not weakly contained in $\lambda$. Thus, we conclude that $C^*_{\pi\otimes\sigma}(\Gamma)$ is not locally reflexive.
\end{proof}

\begin{corollary}\label{nonlocalcor}
Suppose $\Gamma$ and $\pi$ satisfy the conditions prescribed at the beginning of the section and that $C^*_\pi(\Gamma)\neq C^*_r(\Gamma)$ canonically. Then $C^*_\pi(\Gamma)$ is not locally reflexive.
\end{corollary}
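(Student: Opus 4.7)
The plan is to apply Theorem \ref{nonlocal} with the choice $\sigma=\lambda_G|_\Gamma$. First I would verify the hypotheses on $\Gamma$ and $\sigma$. Since $\Gamma$ algebraically embeds in the amenable group $G$, Theorem \ref{factorization property} provides the $*$-representation extension $\sigma_{\lambda_\Gamma,\rho_\Gamma}\fn C^*_{\lambda_G|_\Gamma}(\Gamma)\otimes_{\min}C^*_{\lambda_G|_\Gamma}(\Gamma)\to B(\ell^2(\Gamma))$, which yields the factorization property for $\Gamma$ after composing with the canonical quotient from $C^*(\Gamma)\otimes_{\min}C^*(\Gamma)$. That $\sigma$ weakly contains $\lambda$ is the observation from \cite[Lemma 2]{be} already recalled in the paper. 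For the amenability of the canonical trace $\tau_\lambda$ on $C^*_\sigma(\Gamma)$, I would compose the above $*$-representation with the vector state at $\delta_e\in\ell^2(\Gamma)$: this produces a bounded linear functional on $C^*_\sigma(\Gamma)\otimes_{\min}C^*_\sigma(\Gamma)$ whose value at $s\otimes t$ is $\langle\lambda(s)\rho(t)\delta_e,\delta_e\rangle=\delta_{s,t}=\tau_\lambda(st^{-1})$. Under the identification $C^*_\sigma(\Gamma)^{\mathrm{op}}\cong C^*_{\sigma^{\mathrm{op}}}(\Gamma)$ exploited in the proof of Theorem \ref{nonlocal}, this is precisely the characterization of an amenable trace given in \cite[Theorem 6.2.7]{bo}.

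Turning to $\pi$, the paragraph immediately after the list of conditions at the start of the section shows that $\lambda\prec\pi$, while the additional hypothesis $C^*_\pi(\Gamma)\neq C^*_r(\Gamma)$ canonically rules out weak equivalence of $\pi$ with $\lambda$. Theorem \ref{nonlocal} therefore applies and gives that $C^*_{\pi\otimes\sigma}(\Gamma)$ is not locally reflexive. It remains to identify this C*-algebra canonically with $C^*_\pi(\Gamma)$. Hypothesis (ii) gives $\pi\otimes\sigma\prec\pi$. In the reverse direction, amenability of $G$ yields $1_G\prec\lambda_G$, hence $1_\Gamma\prec\sigma=\lambda_G|_\Gamma$, from which $\pi\cong\pi\otimes 1_\Gamma\prec\pi\otimes\sigma$. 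The resulting weak equivalence of $\pi$ with $\pi\otimes\sigma$ gives the canonical identification $C^*_{\pi\otimes\sigma}(\Gamma)=C^*_\pi(\Gamma)$, so $C^*_\pi(\Gamma)$ is not locally reflexive.

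The only substantive step is the verification that $\tau_\lambda$ is an amenable trace of $C^*_{\lambda_G|_\Gamma}(\Gamma)$; once Theorem \ref{factorization property} is translated into the tensor-product language of \cite[Theorem 6.2.7]{bo} via the vector state at $\delta_e$, everything else reduces to routine bookkeeping of weak containments and a single application of Theorem \ref{nonlocal}.
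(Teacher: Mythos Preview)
Your proof is correct and follows essentially the same route as the paper: set $\sigma=\lambda_G|_\Gamma$, use Theorem~\ref{factorization property} to see that $\tau_\lambda$ is an amenable trace on $C^*_\sigma(\Gamma)$, and then invoke Theorem~\ref{nonlocal}. The only minor difference is the endgame: the paper uses just the one-sided containment $\pi\otimes\sigma\prec\pi$ together with the fact that local reflexivity passes to quotients, whereas you establish the full weak equivalence $\pi\sim\pi\otimes\sigma$ (via $1_\Gamma\prec\lambda_G|_\Gamma$) to get a canonical identification $C^*_{\pi\otimes\sigma}(\Gamma)=C^*_\pi(\Gamma)$; both finish the argument equally well.
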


\begin{proof}
Let $G$ be a locally compact group which satisfies the prescribed conditions. Since $\lambda_G$ is unitarily equivalent to $\lambda_G^\mathrm{op}$, the functional $1_\Delta$ extends to a positive linear functional of $C^*_{\lambda_G|_\Gamma}(\Gamma)\otimes_{\min} C^*_{(\lambda_G|_\Gamma)^\mathrm{op}}(\Gamma)$ by Theorem \ref{factorization property}. Hence, the canonical trace $\tau_\lambda$ is an amenable trace of $C^*_{\lambda_G|_\Gamma}(\Gamma)$ (see \cite[Theorem 6.2.7]{bo}). So, with $\lambda_G|_\Gamma$ playing the roll of $\sigma$, $C^*_{\pi\otimes \lambda_G|_\Gamma}$ is not locally reflexive by the previous lemma. 
 As $\pi\otimes\lambda_G|_\Gamma$ is weakly contained in $\pi$ and local reflexivity is preserved under C*-quotients, we conclude that $C^*_\pi(\Gamma)$ is not locally reflexive.
%
\end{proof}

\subsection*{Exactness}

Let us  recall from Kirchberg's definition
that a C*-algebra $A$ is {\em exact}  if 
for any C*-algebra $B$, and any  norm closed two-sided ideal $J \subseteq B$, the sequence
\[
0\to A \otimes_{\rm min} J \to A \otimes_{\rm min} B \to A \otimes_{\rm min} B/J \to 0
\]
is exact.
It is very often (for instance, when $\Gamma = {\mathbb F}_d$ for $d \ge 2$,  ${\mathbb Z}^2 \rtimes SL_2({\mathbb Z})$, or $SL_n({\mathbb Z})$
for $n \ge 2$) 
that the reduced group C*-algebra $C^*_r(\Gamma)$ is  exact, but the full group C*-algebra $C^*(\Gamma)$ is not.
Indeed,  it is known that if $\Gamma$ is a residually finite discrete group, then   the full group C*-algebra
$C^*(\Gamma)$ is not exact  if and only if the group $\Gamma$ is non-amenable  (see \cite[Proposition 3.7.11]{bo}).
We now show the failure of exactness in the class of C*-algebras we are considering.

\begin{theorem} \label{nonexact}
Suppose $\Gamma$ and $\pi$ satisfy the conditions prescribed at the beginning of the section and that $C^*_\pi(\Gamma)\neq C^*_r(\Gamma)$ canonically. Then $C^*_\pi(\Gamma)$ is not exact.
\end{theorem}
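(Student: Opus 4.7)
The plan is to deduce Theorem \ref{nonexact} directly from Corollary \ref{nonlocalcor} by invoking Kirchberg's foundational theorem that every exact C*-algebra is locally reflexive (see \cite[Theorem 9.4.1]{bo}, or Kirchberg's original paper). The hypotheses on $\Gamma$ and $\pi$ in the present theorem are identical to those of Corollary \ref{nonlocalcor}, which therefore tells us that $C^*_\pi(\Gamma)$ fails to be locally reflexive. The contrapositive of Kirchberg's implication then immediately yields the desired conclusion that $C^*_\pi(\Gamma)$ is not exact.

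A self-contained alternative, mirroring the structure of the proof of Theorem \ref{nonlocal}, would replace the appeal to Proposition \ref{localthm} with the definition of exactness applied to the short exact sequence
\[
0 \to \ker\widetilde\lambda \to C^*_{\pi\otimes\lambda_G|_\Gamma}(\Gamma) \to C^*_r(\Gamma) \to 0
\]
after tensoring on the right with $C^*_{(\lambda_G|_\Gamma)^{\mathrm{op}}}(\Gamma)$. Since exactness is preserved under C*-quotients, one may first replace $\pi$ by its canonical quotient $\pi \otimes \lambda_G|_\Gamma$ before running this argument. The kernel-chase and trace-amenability input supplied by Theorem \ref{factorization property} then reproduce the contradiction $\pi \prec \lambda$ exactly as in the proof of Theorem \ref{nonlocal}.

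The main (and only) obstacle is simply recalling that exactness implies local reflexivity; beyond that structural fact, Theorem \ref{nonexact} is an immediate consequence of the work already done in this section.
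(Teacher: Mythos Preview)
Your argument is correct and essentially identical to the paper's: the paper likewise deduces the result from the failure of local reflexivity (via Theorem~\ref{nonlocal}/Corollary~\ref{nonlocalcor}) together with the implication exact $\Rightarrow$ locally reflexive, and also notes the alternative of rerunning the proof of Theorem~\ref{nonlocal} directly. One minor slip in your alternative: what you call ``the definition of exactness applied to the short exact sequence'' is not literally the definition---exactness of $A$ says tensoring \emph{by} $A$ preserves short exact sequences, not that sequences with $A$ in the middle remain exact after $-\otimes_{\min} C$---so the correct input there is the Effros--Haagerup fact that the conclusion of Proposition~\ref{localthm} holds when $B$ is exact, which is precisely how the paper phrases its alternative.
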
 
\begin{proof}
This is an immediate consequence of Theorem \ref {nonlocal} since every exact C*-algebra (or operator space) is locally reflexive (see \cite{eor}). 
Alternatively, we can apply the same argument as given in the proof of Theorem \ref {nonlocal}  since the conclusion for Proposition \ref{localthm} also holds when C*-algebra $B$  is exact (see \cite[Proposition 5.3]{eh}).
\end{proof}

\subsection*{Local Lifting Property}

A unital C*-algebra $A$ has the  {\em local lifting property} (or in short {\em LLP}) if for any C*-algebra $B$ with closed ideal $J$ and any unital completely positive map $\varphi : A \to B/J$ is locally liftable, i.e., for any finite dimensional operator system $E \subseteq A$, there exists a unital completely  positive map $\psi : E \to B$ such that $\pi \circ \psi = \varphi|_E$.
It is known that the full group C*-alegbra $C^*(\F_2)$ has the lifting property 
and thus has the LLP.
We will make use of the following characterization of the LLP due to Effros and Haagerup (see \cite{eh}) to show failure of LLP in the class of C*-algebras under consideration.

\begin{proposition}
A unital C*-algebra $A = B/J$ has the LLP if and only if for any C*-algebra $C$, the sequence
\[
0 \to  J \otimes _{\rm min} C \to B \otimes _{\rm min} C \to B/J \otimes _{\rm min} C \to 0
\]
is exact.\end{proposition}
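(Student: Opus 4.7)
The plan is to establish the stated equivalence by tracking the interplay between u.c.p.\ lifting and tensor-product exactness, following the approach of Effros and Haagerup~\cite{eh}. Write $q\fn B\to A$ for the quotient map. Functoriality of $\otimes_{\rm min}$ makes $q\otimes\mathrm{id}_C\fn B\otimes_{\rm min}C\to A\otimes_{\rm min}C$ a contractive $*$-homomorphism whose kernel automatically contains $J\otimes_{\rm min}C$. Since the induced map $(B\otimes_{\rm min}C)/(J\otimes_{\rm min}C)\to A\otimes_{\rm min}C$ is a $*$-homomorphism of C*-algebras, it is isometric as soon as it is injective, so the real content of the proposition is the equivalence of LLP for $A$ with the equality $\ker(q\otimes\mathrm{id}_C)=J\otimes_{\rm min}C$ for every $C$.

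For the direction $(\Rightarrow)$, assume $A$ has the LLP. Given a nonzero $[x]\in(B\otimes_{\rm min}C)/(J\otimes_{\rm min}C)$, select a $*$-representation $\tilde\pi$ of this quotient algebra with $\tilde\pi([x])\neq 0$. Pulling back to $B\otimes_{\rm min}C$ and restricting to the two tensor factors yields commuting $*$-representations $\pi_B\fn B\to\mc B(H)$ and $\pi_C\fn C\to\mc B(H)$ with $\pi_B|_J=0$, so $\pi_B$ descends to $\tilde\pi_A\fn A\to\mc B(H)$. The difficulty is that the product map $\tilde\pi_A\cdot\pi_C$ on $A\odot C$ is a priori only max-continuous. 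LLP is precisely what allows us, via Arveson's extension theorem applied locally on finite-dimensional operator systems in $A$, to approximately dilate $\tilde\pi_A$ so that the resulting product with $\pi_C$ extends continuously to $A\otimes_{\rm min}C$ while still detecting $[x]$.

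For the direction $(\Leftarrow)$, assume the tensor sequence is exact for every $C$, and let $\varphi\fn A\to D/I$ be u.c.p.\ with $E\subseteq A$ a finite-dimensional operator system. Applying the exactness hypothesis with $C=D$ identifies $(B\otimes_{\rm min}D)/(J\otimes_{\rm min}D)$ with $A\otimes_{\rm min}D$. Pulling back the canonical u.c.p.\ map $A\otimes_{\rm min}D\to (D/I)\otimes_{\rm min}D$ arising from $\varphi$, and applying Arveson's extension theorem on the finite-dimensional system $E$ together with the quotient identification, produces the required u.c.p.\ lift $\psi\fn E\to D$ of $\varphi|_E$.

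The main obstacle throughout is the mismatch between the minimal tensor norm and representations built from commuting u.c.p.\ maps, since the product of two commuting u.c.p.\ maps is naturally max-continuous but not automatically min-continuous. Resolving this is the technical heart of the Effros-Haagerup argument and requires Arveson's extension theorem, careful finite-dimensional approximation, and the dual reformulation of LLP in terms of states on tensor products vanishing on the appropriate ideal.
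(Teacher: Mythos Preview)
The paper does not prove this proposition; it is quoted verbatim as a known result of Effros and Haagerup \cite{eh} and used as a black box in the proof of Theorem~\ref{llp}. So there is no ``paper's own proof'' to compare against, and your write-up is attempting more than the paper does.

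That said, your sketch has a real gap. The forward direction $(\Rightarrow)$ is morally fine but unnecessarily roundabout: rather than analysing an arbitrary representation of $(B\otimes_{\min}C)/(J\otimes_{\min}C)$ and invoking an ``approximate dilation,'' apply LLP directly to the identity $\id_A\fn A\to B/J$. For a finite-dimensional operator system $E\subseteq A$ you obtain a u.c.p.\ lift $\psi\fn E\to B$ with $q\circ\psi=\id_E$; then for $x\in E\odot C$ the element $(\psi\otimes\id_C)(x)\in B\otimes_{\min}C$ is a preimage of $x$ with norm at most $\|x\|_{A\otimes_{\min}C}$, which immediately gives $\|x\|_{(B\otimes_{\min}C)/(J\otimes_{\min}C)}\le\|x\|_{A\otimes_{\min}C}$ and hence equality.

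The reverse direction $(\Leftarrow)$ in your sketch does not work as written. Setting $C=D$ and forming $\varphi\otimes\id_D\fn A\otimes_{\min}D\to(D/I)\otimes_{\min}D$ gives you maps between tensor products, but nothing in that picture singles out a u.c.p.\ map $E\to D$: Arveson's extension theorem lets you extend u.c.p.\ maps \emph{into} $\mathcal B(H)$ from a subsystem to a larger algebra, it does not manufacture lifts \emph{through} a quotient $D\to D/I$. The quotient identification $(B\otimes_{\min}D)/(J\otimes_{\min}D)\cong A\otimes_{\min}D$ tells you elements lift, not that completely positive maps do. The actual Effros--Haagerup argument for this implication passes through a duality between u.c.p.\ maps and states on suitable tensor products (or, in later formulations, through Kirchberg's characterisation via $\mathcal B(\ell^2)$), and is not recoverable from the two sentences you wrote. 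If you want a self-contained proof here you will need to supply that missing mechanism; otherwise, citing \cite{eh} as the paper does is the honest route.
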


We remark that in the previous two theorems we assumed that $C^*_\pi(\Gamma)\neq C^*_r(\Gamma)$ but allowed $C^*_\pi(\Gamma)=C^*(\Gamma)$. Our next result reverses these roles of $C^*_r(\Gamma)$ and $C^*(\Gamma)$ in the sense that $C^*_\pi(\Gamma)$ is allowed to be $C^*_r(\Gamma)$ but not $C^*(\Gamma)$.

\begin{theorem}\label{llp}
Suppose $\Gamma$ and $\pi$ satisfy the conditions prescribed at the beginning of the section and that $C^*_\pi(\Gamma)\neq C^*(\Gamma)$ canonically. Then $C^*_\pi(\Gamma)$ does not have the LLP.
\end{theorem}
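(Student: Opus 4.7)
The plan is to suppose $C^*_\pi(\Gamma)$ has the LLP and derive $C^*_\pi(\Gamma)=C^*(\Gamma)$ canonically, contradicting the hypothesis. Writing $C^*_\pi(\Gamma)=C^*(\Gamma)/J$ and setting $\sigma=\lambda_G|_\Gamma$, the preceding proposition gives that the sequence
\[
0\to J\otimes_{\min}C^*_{\sigma^{\mathrm{op}}}(\Gamma)\to C^*(\Gamma)\otimes_{\min}C^*_{\sigma^{\mathrm{op}}}(\Gamma)\to C^*_\pi(\Gamma)\otimes_{\min}C^*_{\sigma^{\mathrm{op}}}(\Gamma)\to 0
\]
is exact.

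I plan to mimic the construction in the proof of Theorem \ref{nonlocal}, using the universal representation $\pi_u\fn\Gamma\to B(H_{\pi_u})$ (so $C^*_{\pi_u}(\Gamma)=C^*(\Gamma)$) in place of $\pi$. Set $\mu(s,t)=\pi_u(s)\otimes\lambda(s)\rho(t)$ on $H_{\pi_u}\otimes\ell^2(\Gamma)$. Since $G$ is amenable, $\sigma$ weakly contains $1_\Gamma$, so $\pi_u\otimes\sigma\sim\pi_u$ weakly and $C^*_{\pi_u\otimes\sigma}(\Gamma)=C^*(\Gamma)$ canonically. The argument of Theorem \ref{nonlocal} (using that $\tau_\lambda$ is an amenable trace of $C^*_\sigma(\Gamma)$, as in the proof of Corollary \ref{nonlocalcor}) then shows $\mu$ extends to a $*$-representation of $C^*(\Gamma)\otimes_{\min}C^*_{\sigma^{\mathrm{op}}}(\Gamma)$. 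Since $\mu|_{C^*(\Gamma)\otimes 1}=\pi_u\otimes\lambda$ is an amplification of $\lambda$, its kernel is $\ker\widetilde\lambda\supseteq J$, so by continuity $\mu$ annihilates $J\otimes_{\min}C^*_{\sigma^{\mathrm{op}}}(\Gamma)$. The LLP exactness thus produces a $*$-representation $\bar\mu\fn C^*_\pi(\Gamma)\otimes_{\min}C^*_{\sigma^{\mathrm{op}}}(\Gamma)\to B(H_{\pi_u}\otimes\ell^2(\Gamma))$.

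Next, I restrict along the diagonal. The group-ring map $s\mapsto s\otimes s$ extends to a $*$-homomorphism $\Delta_*\fn C^*(\Gamma)\to C^*_\pi(\Gamma)\otimes_{\min}C^*_{\sigma^{\mathrm{op}}}(\Gamma)$ whose image is isometrically isomorphic to $C^*_{\pi\otimes\sigma^{\mathrm{op}}}(\Gamma)$ (computed via faithful representations of the two tensor factors). The composition $\bar\mu\circ\Delta_*$ sends $s\in\Gamma$ to $\pi_u(s)\otimes\lambda(s)\rho(s)$, and since the conjugation $\lambda(s)\rho(s)$ fixes $\delta_e\in\ell^2(\Gamma)$, the universal representation $\pi_u$ is a subrepresentation of $\bar\mu\circ\Delta_*$. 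Therefore
\[
\|x\|_{C^*(\Gamma)}=\|\pi_u(x)\|\le\|\bar\mu\circ\Delta_*(x)\|\le\|x\|_{C^*_{\pi\otimes\sigma^{\mathrm{op}}}(\Gamma)}
\]
for every $x\in\C[\Gamma]$, forcing $C^*_{\pi\otimes\sigma^{\mathrm{op}}}(\Gamma)=C^*(\Gamma)$ canonically.

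To close the argument, the inversion $s\mapsto s^{-1}$ extends to a canonical $*$-isomorphism $C^*_\sigma(\Gamma)\to C^*_{\sigma^{\mathrm{op}}}(\Gamma)$ sending $\sigma(s)$ to $\sigma^{\mathrm{op}}(s)$; tensoring with $\mathrm{id}_{C^*_\pi(\Gamma)}$ in the minimal tensor product yields $\|(\pi\otimes\sigma)(x)\|=\|(\pi\otimes\sigma^{\mathrm{op}})(x)\|$ on $\C[\Gamma]$, so $\pi\otimes\sigma^{\mathrm{op}}\sim\pi\otimes\sigma$ weakly. The hypothesis $\pi\otimes\sigma\prec\pi$ thus gives $\pi\otimes\sigma^{\mathrm{op}}\prec\pi$, making $C^*_{\pi\otimes\sigma^{\mathrm{op}}}(\Gamma)$ a canonical quotient of $C^*_\pi(\Gamma)$. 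Combined with the previous paragraph, $C^*_\pi(\Gamma)=C^*(\Gamma)$ canonically, contradicting the hypothesis. I expect the main obstacle to be the careful verification of the weak equivalence $\pi\otimes\sigma^{\mathrm{op}}\sim\pi\otimes\sigma$ and the identification of the image of $\Delta_*$ with $C^*_{\pi\otimes\sigma^{\mathrm{op}}}(\Gamma)$.
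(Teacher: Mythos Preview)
Your proof is correct and follows essentially the same strategy as the paper: define $\mu(s,t)=\pi_u(s)\otimes\lambda(s)\rho(t)$, show it extends to the min tensor product, use the LLP exact sequence to push it down to $C^*_\pi(\Gamma)\otimes_{\min}(\cdots)$, and then restrict along the diagonal to see that $\pi_u$ is weakly contained in $\pi\otimes\sigma\prec\pi$, a contradiction.

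The only real difference is that you work with $C^*_{\sigma^{\mathrm{op}}}(\Gamma)$ in the second tensor factor (mimicking the amenable-trace formulation used in Theorem~\ref{nonlocal}), and are then forced at the end to argue that $\pi\otimes\sigma^{\mathrm{op}}\sim\pi\otimes\sigma$ in order to invoke hypothesis~(ii). The paper avoids this detour entirely: it appeals directly to Theorem~\ref{factorization property}, which already gives that $\sigma_{\lambda_\Gamma,\rho_\Gamma}$ extends to $C^*_{\lambda_G|_\Gamma}(\Gamma)\otimes_{\min}C^*_{\lambda_G|_\Gamma}(\Gamma)$ with no opposite algebra in sight. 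Then $(\pi\times\lambda_G|_\Gamma)\circ\Delta=\pi\otimes\lambda_G|_\Gamma\prec\pi$ follows immediately from~(ii). Your route is fine, but note that the $\mathrm{op}$ is an artifact of the amenable-trace characterization (which needs $A\otimes_{\min}A^{\mathrm{op}}$), not of the underlying argument; since $\lambda_G$ is self-conjugate you can simply drop it from the start.
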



\begin{proof}

By Theorem \ref{factorization property}, the representation $\sigma_{\lambda_\Gamma,\rho_\Gamma}$ extends to a $*$-representation of $C^*_{\lambda_G|_\Gamma}(\Gamma)\otimes_{\min} C^*_{\lambda_G|_\Gamma}$. Let $\pi_u$ denote the universal representation of $\Gamma$. Then $\pi_u\otimes\lambda_G|_\Gamma$ is weakly equivalent to $\pi_u$ since $\lambda_G|_\Gamma$ weakly contains the trivial representation and, hence, the representation $\mu\fn \Gamma\times\Gamma\to B(H_u)\otimes B(\ell^2(\Gamma))$ defined by
$$ \mu(s,t)=\pi_u(s)\otimes \lambda_\Gamma(s)\rho_\Gamma(t) $$
extends to a $*$-representation of $C^*(\Gamma)\otimes_{\min} C^*_{\lambda_G|_\Gamma}(\Gamma)$.

Suppose towards a contradiction that $C^*_\pi(\Gamma)$ has the LLP. Then
$$\ker\big(\pi\otimes\mathrm{id}\fn C^*(\Gamma)\otimes_{\min} C^*_{\lambda_G|_\Gamma}(\Gamma)\to C^*_{\pi}(\Gamma)\otimes_{\min}C^*_{\lambda_G|_\Gamma}(\Gamma)\big)= \ker\pi\otimes_{\min} C^*_{\lambda_G|_\Gamma}(\Gamma)$$
by the above proposition. Observe that when $\mu$ is viewed as a $*$-representation of $C^*(\Gamma)\otimes_{\min} C^*_{\lambda_G|_\Gamma}(\Gamma)$ that the kernel of $\mu$ contains $\ker\lambda_\Gamma\otimes_{\min} C^*_{\lambda_G|_\Gamma}(\Gamma)$ since $\mu|_{\Gamma\times \{e\}}=\pi_u\otimes \lambda_\Gamma$ is unitarily equivalent to an amplification of $\lambda_\Gamma$ by Fell's absorption principle. 
Therefore $\mu$ is weakly contained in $\pi\times\lambda_G|_\Gamma$ by the above condition implied by the LLP since $\ker \lambda_\Gamma\supset \ker \pi$ when $\lambda_\Gamma$ and $\pi$ are viewed as $*$-representations of $C^*(\Gamma)$.

Let $\Delta\fn \Gamma\to\Gamma\times\Gamma$ denote the diagonal embedding. Observe that $\mu\circ \Delta$ contains $\pi_u$ as a subrepresentation. But $(\pi\times\lambda_G|_{\Gamma})\circ \Delta=\pi\otimes \lambda_G|_\Gamma\prec \pi$ does not weakly contain $\pi_u$ and, so, we arrive at a contradiction. Therefore $C^*_\pi(\Gamma)$ does not have the LLP.
\end{proof}

\begin{remark}
Let $\Gamma$ be a discrete group and $\pi$  a representation of $\Gamma$.
If $H$ is a subgroup of $\Gamma$ such that $C^*_{\pi|_{H}}(H)$ is an exotic group C*-algebra of $H$ and 
$\pi|_{H}$  satisfies  the conditions prescribed at the beginning of this section, 
then the results of this section also apply to $C^*_\pi(\Gamma)$ since $C^*_{\pi|_{H}}(H)$ is a C*-subalgebra of 
$C^*_\pi(\Gamma)$ and  local reflexivity, exactness and LLP are all local properties. 
In particular, this implies that $C^*_{\ell^p}(\Gamma)$ for $2<p<\infty$ fails to have these properties where $\Gamma$ is a discrete group containing a noncommutative free group since $C^*_{\ell^p}(\F_2)$ embeds into $C^*_{\ell^p}(\Gamma)$.
\end{remark}

\begin{remark}
We note that not every exotic group C*-algebra need lack all these properties. Indeed, consider the group $\F_2$ and let $\sigma=\lambda\oplus 1_{\F_2}$. Then $C^*_\sigma(\F_2)$ is an exotic group C*-algebra and it is easily verified that $C^*_\sigma(\F_2)\cong C^*_r(\F_2)\oplus \C$. It follows that $C^*_{\sigma}(\F_2)$ has the CCAP by virtue of $C^*_r(\F_2)$ having this property.
\end{remark}


\section{Additional Remarks on $D$-C*-algebras}

We finish off the paper by remarking on a few further properties of $D$-C*-algebras for discrete groups. We will no longer insist that $\Gamma$ embeds inside an amenable locally compact group $G$.

\subsection*{Amenable Traces}

Recall that if $\Gamma$ is a discrete group, then $\Gamma$ is amenable if and only if $C^*_r(\Gamma)$ has an amenable trace. In fact any trace on $C^*_r(\Gamma)$ is amenable in this case. The following proposition shows that this remains true when $C^*_r(\Gamma)$ is replaced by $C^*_D(\Gamma)$ when $C^*_D(\Gamma)\neq C^*(\Gamma)$.
This contrasts with the fact that the trace on $C^*(\Gamma)$ coming from the trivial representation is always amenable (see \cite[Exercise 6.3.2]{bo}).

\begin{lemma}\label{self-adjoint}
Let $\Gamma$ be a discrete group (or an arbitrary set) and $D$ an algebraic ideal of $\ell^\infty(\Gamma)$. Then $D$ is self-adjoint (i.e., $\overline{f}\in D$ for every $f\in D$).
\end{lemma}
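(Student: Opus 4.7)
The plan is to exploit the ideal property by producing, for each $f\in D$, an explicit multiplier $g\in\ell^\infty(\Gamma)$ with $gf=\overline{f}$ pointwise. Since $D$ is an algebraic ideal of $\ell^\infty(\Gamma)$, it is closed under multiplication by bounded functions, and this will immediately give $\overline f\in D$.

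Concretely, given $f\in D$, I would define $g\fn\Gamma\to\C$ by
\[
g(s)=\begin{cases}\overline{f(s)}/f(s) & \text{if } f(s)\neq 0,\\ 0 & \text{if } f(s)=0.\end{cases}
\]
For each $s\in\Gamma$ with $f(s)\neq 0$ one has $|g(s)|=1$, and $g(s)=0$ otherwise, so $\|g\|_\infty\leq 1$ and in particular $g\in\ell^\infty(\Gamma)$. A case split on whether $f(s)=0$ shows that $g(s)f(s)=\overline{f(s)}$ in both cases, hence $gf=\overline f$ as elements of $\ell^\infty(\Gamma)$. Since $D$ is an algebraic ideal and $f\in D$, we conclude $\overline f=gf\in D$, as required.

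There is essentially no obstacle here beyond noticing the right multiplier; the only mild point is handling the zero set of $f$, which is dealt with by the case split in the definition of $g$. No topological or approximation arguments are needed, and the assertion holds for $\Gamma$ an arbitrary set since nothing about the group structure is used.
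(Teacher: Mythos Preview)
Your proof is correct and essentially identical to the paper's: the paper defines $g(s)=\overline{\sgn f(s)}$ and writes $\overline{f}=fg^2$, but $g^2$ is exactly your multiplier $\overline{f(s)}/f(s)$ (extended by zero), so the argument is the same.
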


\begin{proof}
Let $f\in D$ and define $g\in \ell^\infty(\Gamma)$ by $g(s)=\overline{\mathrm{sgn}\,f(s)}$ for $s\in\Gamma$. Then $\overline{f}=fg^2\in D$ since $D$ is an ideal of $\ell^\infty(\Gamma)$.
\end{proof}

\begin{prop}\label{trace}
Suppose that $\Gamma$ is a non-amenable discrete group and $D$ is an algebraic ideal of $\ell^\infty(\Gamma)$ such that $C^*_D(\Gamma)\neq 0$ and $C^*_D(\Gamma)\neq C^*(\Gamma)$ canonically. Then $C^*_D(\Gamma)$ does not admit any amenable traces. 
\end{prop}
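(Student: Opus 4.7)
My plan is to argue by contradiction. Suppose $\tau$ is an amenable trace on $C^*_D(\Gamma)$; I will deduce that $C^*_D(\Gamma)=C^*(\Gamma)$, contradicting the hypothesis. By \cite[Theorem 6.2.7]{bo}, the functional $\mu_\tau\fn a\otimes b^{\mathrm{op}}\mapsto \tau(ab)$ extends to a state on $C^*_D(\Gamma)\otimes_{\min} C^*_D(\Gamma)^{\mathrm{op}}$. Because $D$ is self-adjoint (Lemma \ref{self-adjoint}), the coefficients of $s\mapsto \pi(s^{-1})$ are conjugates of those of $\pi$ and hence still lie in $D$, so the map $s\mapsto (s^{-1})^{\mathrm{op}}$ extends to a $*$-isomorphism $C^*_D(\Gamma)\cong C^*_D(\Gamma)^{\mathrm{op}}$. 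Under this identification $\mu_\tau$ becomes a state $\nu_\tau$ on $C^*_D(\Gamma)\otimes_{\min}C^*_D(\Gamma)$ with $\nu_\tau(s\otimes t)=\tau(st^{-1})$.

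Next I would use that $\pi_D$ weakly contains $\lambda$ (as noted in the preliminaries, since $C^*_D(\Gamma)\neq 0$) to invoke the tensor-product remark closing the preliminaries, viewing $C^*_D(\Gamma)\otimes_{\min}C^*_D(\Gamma)$ as a C*-completion of $\C[\Gamma\times\Gamma]$ arising from the external tensor product $\pi_D\times\pi_D$. Then $\nu_\tau$ corresponds to a positive definite function $\tilde u(s,t)=\tau(st^{-1})$ on $\Gamma\times\Gamma$ that is weakly contained in $\pi_D\times\pi_D$. Restricting to the diagonal subgroup $\Delta\cong\Gamma$ yields $\tilde u(s,s)=\tau(e)=1$, so the trivial representation satisfies $1_\Gamma\prec (\pi_D\times\pi_D)|_\Delta = \pi_D\otimes \pi_D$, where the right side is the internal tensor product of two $D$-representations and is itself a $D$-representation.

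To finish, for any representation $\sigma$ of $\Gamma$ I would write $\sigma\cong 1_\Gamma\otimes\sigma\prec(\pi_D\otimes\pi_D)\otimes\sigma$; since tensoring a $D$-representation with any representation yields a $D$-representation, the right side is a $D$-representation, so $\sigma$ extends to a $*$-representation of $C^*_D(\Gamma)$. As $\sigma$ was arbitrary, this forces $\|\cdot\|_D=\|\cdot\|_{C^*(\Gamma)}$ on $\C[\Gamma]$, the desired contradiction. The main obstacle I anticipate is the careful identification $C^*_D(\Gamma)^{\mathrm{op}}\cong C^*_D(\Gamma)$ via inversion (which is exactly where Lemma \ref{self-adjoint} enters), allowing the amenability of $\tau$ to be repackaged as a positive definite function on $\Gamma\times\Gamma$; once that reformulation is in hand, the diagonal-restriction trick delivers the trivial representation as a weak subrepresentation of a $D$-representation, and the rest follows immediately.
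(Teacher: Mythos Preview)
Your proposal is correct and follows essentially the same route as the paper: amenable trace $\Rightarrow$ min-bounded functional on $C^*_D(\Gamma)\otimes_{\min}C^*_D(\Gamma)^{\mathrm{op}}$, identify $C^*_D(\Gamma)^{\mathrm{op}}\cong C^*_D(\Gamma)$ via $s\mapsto s^{-1}$ using Lemma~\ref{self-adjoint}, obtain the positive definite function $(s,t)\mapsto\tau(st^{-1})$ on $\Gamma\times\Gamma$, restrict to the diagonal to get $1_\Gamma\prec\pi_D\otimes\pi_D$. The only divergence is at the very last step: the paper observes $\pi_D\otimes\pi_D\prec\pi_D$, hence $1_\Gamma\prec\pi_D$, and then cites \cite[Theorem 3.2]{bg} to conclude $\Gamma$ is amenable, contradicting that hypothesis; you instead tensor with an arbitrary $\sigma$ to force $\|\cdot\|_D=\|\cdot\|_{C^*(\Gamma)}$, contradicting $C^*_D(\Gamma)\neq C^*(\Gamma)$. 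Both contradictions are valid (and indeed the two hypotheses are equivalent once $C^*_D(\Gamma)\neq 0$), so this is a cosmetic difference; your version has the minor advantage of being self-contained rather than citing \cite{bg}.
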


\begin{proof}
Suppose that $C^*_{D}(\Gamma)$ admits an amenable trace $\tau$. Then the map $a\otimes b^{\tn{op}}\mapsto \tau (ab)$ extends to a state on $C^*_{D}(\Gamma)\otimes_{\min} C^*_{D}(\Gamma)^{\tn{op}}$. Observe that if $\pi$ is an $D$-representation if and only if $\overline{\pi}$ is an $D$-representation since $D$ is self-adjoint. So $C^*_{D}(\Gamma)^{\tn{op}}\cong C^*_{D}(\Gamma)$ via $s^{\tn{op}} \mapsto s^{-1}$ since for all $a_1,\ldots,a_n\in \C$ and $s_1,\ldots,s_n\in \Gamma$,
\begin{eqnarray*}
\bigg\|\sum_{i=1}^n{a_i}s^{-1}\bigg\|_{D} &=& \sup_{\pi,\xi,\eta}\bigg|\sum_{i=1}^n a_i\lla \pi(s^{-1})\xi,\eta\rra\bigg|\\
&=&\sup_{\pi,\xi,\eta}\bigg|\sum_{i=1}^n a_i\lla \xi,\pi(s)\eta\rra\bigg|\\
&=&\sup_{\pi,\xi,\eta}\bigg|\sum_{i=1}^n a_i\lla \overline{\pi}(s)\overline{\eta},\overline{\xi}\rra\bigg|\\
&=&\bigg\|\sum_{i=1}^na_is_i\bigg\|_{D}
\end{eqnarray*}
where the supremums are taken over $D$-representations $\pi\fn \Gamma\to B(H_\pi)$ and $\xi,\eta\in H_\pi$ with $\|\xi\|\leq 1$ and $\|\eta\|\leq 1$. This in particular implies that the map $$s\otimes t\mapsto\tau(st^{-1})$$ for $s,t\in \Gamma$ extends to a positive linear functional on $C^*_{D}(\Gamma)\otimes_{\min} C^*_{D}(\Gamma)$. When restricted to the diagonal subgroup $\Delta$, this function is identically 1.

Let $\pi$ be a $D$-representation which extends to a faithful representation of $C^*_D(\Gamma)$. Then the above observation implies that the representation $\pi\otimes\pi$ of $\Gamma$ weakly contains the trivial representation. Hence, $\pi$ weakly contains the trivial representation since $\pi\otimes\pi$ is weakly contained in $\pi$. So $\Gamma$ is amenable (see \cite[Theorem 3.2]{bg}).
\end{proof}

\subsection*{Quasidiagonality and WEP}

Recall that a C*-algebra $A$ is {\it quasidiagonal} if there exists a net of completely contractive positive (c.c.p.) maps $\varphi_i\fn A\to M_{n(i)}$ into finite dimensional C*-algebras which are asymptotically multiplicative and isometric in the sense that $\|\varphi_i(ab)-\varphi_i(a)\varphi_i(b)\|\to 0$ for all $a,b\in A$ and $\|\varphi_i(a)\|\to \|a\|$ for every $a\in A$.
Also, recall that a C*-algebra $A$ has the {\it weak expectation property} ({\it WEP}) if there exists a c.c.p. map $\Phi\fn B(H)\to A^{**}$ such that $\Phi(a)=a$ for all $a\in A$, where $A\subseteq B(H)$ is some faithful representation of $A$. This property is independent of the choice of faithful representation.
As a consequence of the previous theorem, we observe the failure of quasidiagonality and WEP in $D$-C*-algebras.

\begin{corollary}\label{quasi and wep}
Suppose that $\Gamma$ is a non-amenable discrete group and $D$ is an algebraic ideal of $\ell^\infty(\Gamma)$ such that $C^*_D(\Gamma)\neq 0$ and $C^*_D(\Gamma)\neq C^*(\Gamma)$ canonically. Then
\begin{itemize}
\item[(i)] $C^*_{D}(\Gamma)$ is not quasidiagonal;
\item[(ii)] $C^*_{D}(\Gamma)$ does not have the WEP.
\end{itemize}
\end{corollary}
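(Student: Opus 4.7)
The plan is to deduce both (i) and (ii) from Proposition \ref{trace} by showing that under either hypothesis $C^*_D(\Gamma)$ would be forced to admit an amenable trace, directly contradicting that proposition. The tracial state to work with is obtained by pulling back the canonical trace $\tau_\lambda$ on $C^*_r(\Gamma)$ along the canonical quotient $C^*_D(\Gamma)\to C^*_r(\Gamma)$; this quotient exists because the hypothesis $C^*_D(\Gamma)\neq 0$ forces $\Gamma$ to admit a $D$-representation, hence $\|\cdot\|_D\geq\|\cdot\|_r$ by Fell absorption (as recalled in the preliminaries). Denote the resulting tracial state on $C^*_D(\Gamma)$ by $\tau$.

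For (ii), assume $C^*_D(\Gamma)$ has the WEP, fix a faithful representation $C^*_D(\Gamma)\subseteq B(H)$ and a c.c.p.\ map $\Phi\fn B(H)\to C^*_D(\Gamma)^{**}$ restricting to the identity on $C^*_D(\Gamma)$. Let $\widetilde\tau$ be the normal tracial extension of $\tau$ to $C^*_D(\Gamma)^{**}$ and set $\varphi:=\widetilde\tau\circ\Phi$, a state on $B(H)$ whose restriction to $C^*_D(\Gamma)$ recovers $\tau$. Because $\Phi|_{C^*_D(\Gamma)}=\mathrm{id}$, every element of $C^*_D(\Gamma)$ lies in the multiplicative domain of $\Phi$, so $\Phi(uTu^*)=u\Phi(T)u^*$ for each unitary $u\in C^*_D(\Gamma)$ and each $T\in B(H)$, and the trace property of $\widetilde\tau$ then yields $\varphi(uTu^*)=\varphi(T)$. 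Thus $\tau$ is amenable, contradicting Proposition \ref{trace}.

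For (i), if $C^*_D(\Gamma)$ is quasidiagonal, take c.c.p.\ maps $\varphi_i\fn C^*_D(\Gamma)\to M_{n(i)}$ that are asymptotically multiplicative and isometric. The composites $\mathrm{tr}_{n(i)}\circ\varphi_i$ are positive functionals with norms converging to $1$, and any weak-$*$ cluster point is a tracial state by asymptotic multiplicativity. By construction this trace is quasidiagonal, and quasidiagonal traces are automatically amenable (a standard ultraproduct argument, see \cite[Chapter 7]{bo}). This, once more, contradicts Proposition \ref{trace}.

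The substantive mathematical input is entirely in Proposition \ref{trace}; what remains are the two standard textbook implications ``WEP in the presence of a trace yields an amenable trace'' and ``unital quasidiagonal yields a quasidiagonal, hence amenable, trace,'' so no genuinely new obstacle arises here. The only point requiring mild care is verifying the tracial invariance of $\widetilde\tau\circ\Phi$ via the multiplicative domain, as spelled out above.
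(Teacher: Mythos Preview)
Your proposal is correct and follows essentially the same approach as the paper: both parts reduce to Proposition \ref{trace} via the standard implications ``quasidiagonal $\Rightarrow$ amenable trace'' and ``WEP $\Rightarrow$ every trace is amenable,'' which the paper simply cites as \cite[Proposition 7.1.16]{bo} and \cite[Exercise 6.3.1]{bo} respectively, while you spell out the arguments (the multiplicative-domain computation for WEP and the weak-$*$ cluster point construction for quasidiagonality). One small point worth tightening in your part (i): to ensure the functionals $\mathrm{tr}_{n(i)}\circ\varphi_i$ have norms converging to $1$, you should note that in the unital case the $\varphi_i$ can be taken u.c.p.\ rather than merely c.c.p.; this is standard (see \cite[Lemma 7.1.4]{bo}) but not automatic from the definition as stated.
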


\begin{proof}
(i): Quasidiagonal C*-algebras are known to admit an amenable trace (see \cite[Proposition 7.1.16]{bo}). Therefore $C^*_{D}(\Gamma)$ is not quasidiagonal as it has no amenable traces.

(ii): It is well known that every trace on a C*-algebra with the WEP is amenable (see \cite[Exercise 6.3.1]{bo}). Since the canonical trace on $C^*_r(\Gamma)$ gives rise to a trace on $C^*_{D}(\Gamma)$, we conclude that $C^*_{D}(\Gamma)$ fails to admit the WEP.
\end{proof}

\section*{Acknowledgements}
The authors are grateful to an anonymous referee for the suggestion to include Theorem \ref{nonlocal} as a generalization of Corollary \ref{nonlocalcor} and an observation which helped shorten the proof of Theorem \ref{factorization property}.


\end{document}